\documentclass[11pt]{article}

\usepackage[utf8]{inputenc}
\usepackage[margin=0.9in]{geometry}
\usepackage{authblk}
\usepackage{amsmath,amssymb,amsthm,mathtools}
\usepackage{aliascnt}
\usepackage{microtype}
\usepackage{enumitem}
\usepackage[hidelinks]{hyperref}
\usepackage[nameinlink,capitalize,noabbrev]{cleveref}

\hypersetup{
  pdftitle={A Short Proof of the Hypergraph Moore Bound},
  pdfauthor={}
}

\newtheorem{theorem}{Theorem}[section]
\newaliascnt{proposition}{theorem}
\newtheorem{proposition}[proposition]{Proposition}
\aliascntresetthe{proposition}
\newaliascnt{lemma}{theorem}
\newtheorem{lemma}[lemma]{Lemma}
\aliascntresetthe{lemma}
\newaliascnt{corollary}{theorem}

\aliascntresetthe{corollary}
\newaliascnt{definition}{theorem}
\theoremstyle{definition}
\newtheorem{definition}[definition]{Definition}
\aliascntresetthe{definition}
\theoremstyle{remark}
\newaliascnt{remark}{theorem}

\aliascntresetthe{remark}

\crefname{theorem}{theorem}{theorems}
\crefname{proposition}{proposition}{propositions}
\crefname{lemma}{lemma}{lemmas}
\crefname{corollary}{corollary}{corollaries}
\crefname{definition}{definition}{definitions}
\crefname{remark}{remark}{remarks}

\newcommand{\F}{\mathbb F}
\newcommand{\1}{\mathbf 1}

\newcommand{\cB}{\mathcal B}
\newcommand{\cH}{\mathcal H}

\newcommand{\cC}{\mathcal C}

\newcommand{\cT}{\mathcal T}
\newcommand{\Lop}{\mathcal L}
\newcommand{\Om}{\Omega}
\newcommand{\sd}{\mathbin{\triangle}}
\newcommand{\bigsd}{\mathop{\bigtriangleup}}
\newcommand{\bd}{\bar d}
\newcommand{\Rh}[1]{\widehat R_{#1}}
\newcommand{\norm}[1]{\left\lVert #1\right\rVert}
\DeclareMathOperator{\Tr}{Tr}
\DeclareMathOperator{\diag}{diag}

\numberwithin{equation}{section}
\allowdisplaybreaks
\setlist[enumerate]{leftmargin=2.1em,itemsep=0.2em,topsep=0.3em}
\title{A Spectral Proof of the Hypergraph Moore Bound}
\author[1,2]{Alexander Schmidhuber}
\author[2]{Matthew B. Hastings}

\affil[1]{Center for Theoretical Physics, MIT}
\affil[2]{Microsoft Station Q}
\date{}

\begin{document}
\maketitle

\vspace{-2em}

\begin{abstract}
A nonempty subfamily of a $k$-uniform hypergraph is an \emph{even cover}
if every vertex lies in an even number of its hyperedges; for $k=2$ these
are edge-disjoint unions of cycles, so the minimum size of an even cover
is the natural hypergraph analogue of girth.  We prove Feige's 2008 conjecture on
the hypergraph Moore bound: there are absolute constants $A$ and $C$ (independent of $k$) such
that for every $k\ge3$ and every $1\le\ell\le n$, any $k$-uniform
hypergraph on $n$ vertices with more than $C\,n^{k/2}/\ell^{k/2-1}$
hyperedges contains an even cover of size at most $A\,\ell\log(en/\ell)$.
Our proof is based on sharp spectral bounds for Kikuchi matrices, which
we expect to be of independent interest; we apply them to the refutation of random constraint satisfaction problems  in a companion paper~\cite{SH}.
\end{abstract}

\section{Introduction}

The Moore bound in its irregular form, due to Alon, Hoory, and
Linial~\cite{AHL}, states that large average degree forces a short cycle in a
graph.  Feige~\cite{Feige} proposed a
hypergraph analogue.  Let $\cH\subseteq\binom{[n]}k$ be a $k$-uniform
hypergraph.  A nonempty subfamily $\cC\subseteq\cH$ is an \emph{even cover}
if every vertex lies in an even number of members of $\cC$; writing $\sd$ for the symmetric difference, this says
\[
        \bigsd_{F\in\cC}F=\varnothing.
\]
For $k=2$ an even cover is precisely an edge-disjoint union of cycles, so the
minimum size $g_{\rm ev}(\cH)$ of an even cover (declared $+\infty$ if none
exists) is the natural hypergraph analogue of girth.  Short even covers are
short linear dependencies among sparse binary vectors, which links
$g_{\rm ev}$ to parity-check matrices of low-density codes and to refutation
certificates for random constraint satisfaction
problems~\cite{NaorVerstraete,Feige,FKO,GKM}.

Feige~\cite{Feige} conjectured that
$C_k n(n/\ell)^{k/2-1}$ hyperedges force an even cover of size
$O_k(\ell\log n)$, giving a hypergraph analogue of the irregular
Moore bound and extending an earlier theorem of Naor and
Verstra\"ete~\cite{NaorVerstraete} from the high-density endpoint
$\ell=\Theta(1)$ for even $k$ to the full range of $\ell$. Guruswami,
Kothari, and Manohar~\cite{GKM} obtained the first bound valid in the whole
range of $\ell$, at a density a factor $\mathrm{polylog}(n)$ above the conjectured
threshold \cite{GKM,HKM}; their
 argument introduced Kikuchi matrices, a hierarchy defined by
Wein, El Alaoui, and Moore~\cite{WEM} (and in a different language by Hastings~\cite{Hastings2019}), into this problem.  Hsieh, Kothari,
and Mohanty~\cite{HKM} reduced the loss to a single logarithm, showing
that $\log n \cdot n^{k/2}/\ell^{k/2-1}$ hyperedges, up to a constant factor,
force an even cover of size $O(\ell\log n)$. For odd uniformities, Hsieh et al.~\cite{HKMMS} lowered the
loss further, to $(\log n)^{1/(k+1)}$ in the range $\ell\le n/\log^2n$.
We prove the conjecture in full, without any logarithmic losses. Our proof yields absolute constants (independent of $k$) and a cover size $O(\ell \log(en/\ell))$ that is slightly 
sharper than the conjectured $O(\ell \log n)$.

\begin{theorem}[Hypergraph Moore bound]\label{thm:main}
There exist absolute constants $A,C>0$ such that for every $k\ge3$,
every $1\le\ell\le n$, and every $k$-uniform hypergraph $\cH\subseteq\binom{[n]}k$, at
least one of
\[
        g_{\rm ev}(\cH)\le A\,\ell\log\frac{en}{\ell},
        \qquad
        |\cH|\le C\,\frac{n^{k/2}}{\ell^{k/2-1}}
\]
holds.
\end{theorem}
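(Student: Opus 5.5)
We follow the Kikuchi-matrix strategy of Guruswami--Kothari--Manohar and Hsieh--Kothari--Mohanty, aiming to remove their logarithmic loss with a sharper spectral bound. First we reduce to even $k$. For odd $k$ we would use the standard pairing trick: split the hyperedges according to a shared vertex, or combine pairs of hyperedges into $2k-2$-uniform ``bipartite'' edges, so that it suffices to handle a Kikuchi matrix indexed by $\ell$-sets with a controlled bipartite structure. We also regularize $\cH$. Pruning vertices of high degree, at the cost of a constant factor in $|\cH|$, makes every $\ell$-subset of size at most $k/2$ lie in roughly its average number of hyperedges. This ``degree-balanced'' hypothesis is what the spectral bound needs. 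Throughout, we argue by contradiction: assume $g_{\rm ev}(\cH)>L:=A\ell\log(en/\ell)$ and $|\cH|>C n^{k/2}/\ell^{k/2-1}$.

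The Kikuchi graph $K_\ell$ has vertex set $\binom{[n]}{\ell}$. We join $S,T$ when $S\sd T=F\in\cH$ with $|S\cap F|=|T\cap F|=k/2$. Each hyperedge contributes $\binom{k}{k/2}\binom{n-k}{\ell-k/2}$ edges, so the average degree is
\[
\bar d\approx |\cH|\binom{k}{k/2}(\ell/n)^{k/2}\gtrsim C\,\ell .
\]
The key combinatorial observation is standard. A closed walk, or more usefully a cycle in $K_\ell$ whose edge labels appear an odd number of times, yields an even cover of size at most the cycle length. If no even cover of size at most $L$ exists, then $K_\ell$ restricted to any labeling by a random sign vector looks locally tree-like in a strong sense. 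Concretely, for every assignment of signs $b_F\in\{\pm1\}$, the signed Kikuchi matrix $\sum_F b_F A_F$ has the same trace moments $\Tr(M^{2r})$ for $r\le L/2$ as the unsigned one. Non-backtracking walks of length at most $L$ whose label multiset has odd multiplicity cannot close up.

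The plan is to compare two bounds. By the irregular Moore bound of Alon--Hoory--Linial applied to $K_\ell$, the number of closed non-backtracking walks of length $2r$ is at least about $N(\bar d-1)^{2r}$. On the other hand, taking $b$ uniformly random and using the even-cover-free hypothesis, every term in the trace expansion survives averaging only if its labels pair up. This lets us bound $\mathbb E\,\Tr(B_b^{(2r)})$, the non-backtracking signed trace, by something like $N\,(c\sqrt{\ell\,\Delta})^{2r}$. Here $\Delta$ is the per-row label degree after regularization, and a careful count gives $\sqrt{\bar d\cdot\ell}$ rather than $\bar d$. Combining the two bounds with $2r\approx L$ and $N=\binom n\ell$ forces $\bar d\lesssim \ell\cdot N^{1/L}=O(\ell)$ once $L\ge A\ell\log(en/\ell)$, since $\log N\approx \ell\log(en/\ell)$. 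This contradicts $\bar d\gtrsim C\ell$ for $C$ large.

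The main obstacle is the sharp spectral step. Prior work bounds the random-signed Kikuchi matrix with matrix Khintchine, which loses $\sqrt{\log N}$ and hence the log factor. We would instead do a direct non-backtracking trace-moment count at length $r\approx\ell\log(en/\ell)$. We charge each new step of the walk by the choice of which $k/2$ coordinates of the current $\ell$-set to swap, and we encode repeated labels by their first occurrence. We must ensure the encoding cost per step is $O(\ell)\cdot(\text{degree})$ with an absolute constant independent of $k$. We expect to need the regularization so that row degrees are uniformly bounded, and a Bethe--Hessian or Ihara-type identity to transfer non-backtracking counts to the spectral radius without losing constants.
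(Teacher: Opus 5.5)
\textbf{There is a genuine gap at the step that carries the theorem.} You assert the bound $\mathbb E\,\Tr\lesssim N(c\sqrt{\ell\Delta})^{2r}$ (``a careful count gives $\sqrt{\bd\,\ell}$''), but you give no mechanism for it, and the encoding you sketch does not deliver it. Encoding a returning step by the first occurrence of its label costs a factor equal to the number of currently open labels, which can be as large as $r\approx\ell\log(en/\ell)$. That is exactly the single logarithm lost by Hsieh--Kothari--Mohanty. Random signs add nothing here: you already observed that under the girth hypothesis the signed and unsigned traces coincide, so everything rests on counting even walks.

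The missing idea is how to pay only $O(\ell)$ per return. The paper does this in four steps.
\begin{enumerate}
\item It lifts walks to states $(S,M)$, where $M$ is the set of labels used an odd number of times so far.
\item It uses the girth hypothesis a second time, beyond closed walks: the hyperedges inside any window $W=S\cup\bigcup_{F\in M}F$ of size at most $\ell+ks$ are linearly independent over $\F_2$, so by Hall they have a system of distinct representatives.
\item It orients the lifted graph so that every in-edge at $(S,M)$ is identified by a vertex of $S$: the representative of its label if the windows agree, otherwise the smallest vertex gained. This gives in-degree at most $2\ell$.
\item The orientation lemma turns this into $\norm{\Lop}\le2\sqrt{2\ell/\bd}$.
\end{enumerate}
Because windows must stay inside the girth radius, the walk length is then $s\asymp\ell\log(en/\ell)/k$, not $\asymp L$. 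The resulting factor $N^{1/s}=e^{O(k)}$ must be absorbed by the $(8/e)^{k/2}$ gain in converting $|\cH|$ into $\bd$.

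\textbf{The two sides of your comparison are also not available as stated.} On the lower side, Alon--Hoory--Linial bounds the number of all non-backtracking walks, not closed ones. There is no lower bound of $N(\bd-1)^{2r}$ on closed non-backtracking walks; for the non-symmetric matrix $B$ one does not even have $\Tr B^{2r}\ge\rho(B)^{2r}$ in general. The usable lower bound is spectral, from a single test vector, which is why $N$ appears only on the upper side.

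On the upper side, the unnormalized Kikuchi matrix cannot close the argument. Its norm is governed by the densest rows, and a row's Kikuchi degree is a sum of codegrees of its $k/2$-subsets, which pruning high-degree vertices does not control. You would need $\Delta=O(\bd)$ with absolute constants, and no such constant-loss regularization is provided. The paper avoids regularization entirely with $K=\Gamma^{-1/2}A_G\Gamma^{-1/2}$, $\Gamma=D+\bd I$. Testing on $\Gamma^{1/2}\1$ gives $\norm K\ge\frac12$ for every nonempty $\cH$, and out-degrees in the orientation lemma are absorbed by $d(S)\le\Gamma(S)$.

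\textbf{The plan also does not cover the full range.}
\begin{itemize}
\item For $\ell<k/2$ your Kikuchi graph is empty. Raising the level only works when $k=O(\ell)$, so $k\gg\ell$ needs a separate device (the paper's crossed signed halves).
\item The dense regime $n\le100k\ell$ likewise needs separate treatment.
\item For odd $k$, the ``standard pairing trick'' runs into the fact that one row--hyperedge incidence can lie on unboundedly many paired moves. The paper resolves this with spread bundles, per-bundle tag coordinates, and contention weights $w_e=1/\delta_e$ that give every port capacity one.
\end{itemize}
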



All logarithms are natural.  The hypergraphs we study are \emph{simple}: $\cH$ is a \emph{set}
of $k$-element subsets of $[n]$.  Throughout, $A$ and $C$ denote positive
absolute constants (independent of all parameters, including $k$); we make no attempt to optimize them.

\subsection{Overview of the proof}\label{sec:overview}

\paragraph{The Kikuchi graph.}
For simplicity, let us first consider the case of $k=2r$ even. We work on the
 $\ell$-slice $\Om_\ell=\binom{[n]}\ell$ of the hypercube, and consider the
\emph{Kikuchi graph}\footnote{The name originates from~\cite{WEM}, motivated by the hierarchy of
Kikuchi approximations to free energy in statistical physics~\cite{Kikuchi1951}.  As the authors
of~\cite{WEM} note, their derivation expands about an inconsistent paramagnetic
point, so the connection between Kikuchi matrices and the Hessian of Kikuchi
free energy holds only to first order.  A more direct physical interpretation
comes from mean-field theory; see~\cite{Hastings2019} and Appendix~B
of~\cite{Schmidhuber2024}.} $G$ in
which two rows are adjacent when they differ by a hyperedge:
$S\sim S'$ whenever $S\sd S'\in\cH$, as in~\cite{WEM,GKM,HKM}.
A hyperedge forms an edge exactly at the rows containing half of it, so a
simple count shows that the average degree $\bd$ of $G$ is of order
$|\cH|(\ell/n)^{k/2}$.  The conjectured density
$n^{k/2}/\ell^{k/2-1}$ is precisely where $\bd$ reaches the row size
$\ell$.  \Cref{thm:main} thereby reduces to one statement:
\begin{center}
\emph{if $\cH$ has no even cover of size $O(\ell\log(en/\ell))$, then
$\bd=O(\ell)$.}
\end{center}

\paragraph{Two spectral bounds.}
We prove \Cref{thm:main} by exhibiting a contradiction between two spectral
bounds on a reweighted version of the Kikuchi matrix: a trivial lower bound, valid for every nonempty $\cH$, and an
upper bound derived from the girth hypothesis (that is, the assumption $g_{\rm ev}(\cH) > A\,\ell\log\frac{en}{\ell}$) when $\bd$ is too large.
Some reweighting is forced on us, since neither the adjacency matrix
$A_G$ of $G$ nor its fully degree-normalized version can carry such a
contradiction.  On $A_G$ itself there is the trivial lower bound
$\norm{A_G}\ge\bd$, but no useful upper bound: the norm of $A_G$ is governed
by the densest neighborhoods rather than by the average (one row of
degree $\Delta$ already forces $\norm{A_G}\ge\sqrt\Delta$) and the girth
hypothesis does nothing to prevent such irregularity.  The naive normalization by
degrees also overshoots: $D^{-1/2}A_GD^{-1/2}$, with $D$ the
diagonal degree matrix of $G$, has top eigenvalue exactly $1$ for \emph{every}
graph without isolated vertices, so an upper bound on its norm carries no
information.  We work instead with the normalization introduced in~\cite{HKM} for the same task:
\[
 K=\Gamma^{-1/2}A_G\,\Gamma^{-1/2},
 \qquad
 \Gamma=D+\bd\,I .
\]
Then a simple lower bound via the vector $\Gamma^{1/2}\1$ gives
$\norm K\ge\1^{\mathsf T}A_G\1/\1^{\mathsf T}\Gamma\1=\tfrac12$ for every
nonempty $\cH$.  Our
goal is now the complementary estimate: \emph{if $\cH$ has no short even
cover, then $\norm{K}<\frac{1}{2}$ unless $\bd=O(\ell)$.}

\paragraph{The trace method and the parity lift.}
The upper bound is proved by the trace method~\cite{FurediKomlos}:
For any integer $s \geq 0$, $\norm K^{2s}\le\Tr K^{2s}$, and the trace is a weighted count of closed
walks of length $2s$ in $G$.  Our main contribution is a technique for accurately counting these walks.

This count must somewhere use the high-girth
hypothesis.  Each step
of a walk traverses a hyperedge; call a closed walk \emph{even} if it
traverses every hyperedge an even number of times.  Even closed
walks (for instance, walks that repeatedly backtrack) arise in every
Kikuchi graph, and contribute
a significant part of the spectrum.  The girth hypothesis constrains exactly the
other, ``odd'' walks: around a closed walk the hyperedges XOR to
$\varnothing$, so those used an even number of times drop out and those
used an odd number of times XOR to $\varnothing$ among themselves; if
there is at least one of them, they form an even cover of size at most $2s$. At our working scale $s = O(\log(|\Om_\ell|)/k)$ this is precisely forbidden by
the high-girth hypothesis.  The upper bound therefore amounts to counting
even walks accurately, and separately from the rest.

Our main technical ingredient is to lift the Kikuchi graph to a graph with memory:  A lifted state is a pair $(S,M)$ of a row $S$ and
a \emph{memory} $M$, the set of hyperedges used an odd number of times
so far.  A step along a hyperedge $F$ replaces $S$ by $S\sd F$ and
toggles $F$ into or out of the memory: a first use of $F$ enters it, a
second use removes it, a third enters it again.  A walk starting from empty memory
$(S,\varnothing)$ returns to
$(S,\varnothing)$ exactly when its projection to $G$ is an even closed
walk at $S$.  When starting at empty memory, the girth hypothesis yields two useful properties:
every closed walk of length $2s$ ends with empty memory, and along the
way its memory never exceeds $s$ hyperedges, since each hyperedge in the
memory must be used at least twice.  Consequently we may express walks on $K$ by walks on the lifted graph, with memory truncated at size $s$:
\begin{equation}
 2^{-2s}\le\norm K^{2s}\le\Tr K^{2s}
 =\sum_{S}\bigl\langle(S,\varnothing)\bigr|\,\Lop^{2s}\,
 \bigl|(S,\varnothing)\bigr\rangle
 \le N\norm \Lop^{2s}.
 \label{eq:trace_main}
\end{equation}
Here $\Lop$ is the lifted walk operator, with the same step weights as $K$,
and $N=|\Om_\ell|$.

\paragraph{Orienting the lifted graph.}
To obtain an accurate upper bound on $\norm{K}$, everything now hinges on bounding $\norm\Lop$. We do so by \emph{orienting} the lifted edges in one of the two directions. It is easy to establish that if the lifted graph admits an orientation such that every state has \emph{incoming} weight at
most $\kappa$, then the walk operator $\Lop$ satisfies $\norm\Lop\le2\sqrt{\kappa/\bd}$ (the denominator arises from the normalization by $\Gamma$).
Combined with equation \eqref{eq:trace_main} and the high-girth hypothesis, this forces
$\bd\le16\,\kappa\,N^{1/s}$.  The theorem is thereby reduced to a
combinatorial task: orient the lifted graph so that little weight points
\emph{into} any one state.  Here there is a revealing warm-up: one may choose the trivial orientation which orients
every edge toward the larger memory. This already bounds the incoming weight by
$|M|\le s$ with no combinatorial input at all, and recovers the Moore bound
with a single logarithmic loss as in \cite{HKM}.  The sharp bound is purely a
better orientation.  A state $(S,M)$ reachable from empty memory involves only a small 
total \emph{support} (the union of $S$ and all hyperedges in $M$), of size at most $\ell+ks$.  The girth hypothesis makes the
hyperedges confined to such a window linearly independent over $\F_2$;
independent vectors satisfy Hall's marriage condition, so the hyperedges
in a window admit a system of distinct representatives.  Orienting each
support-preserving edge toward the endpoint whose row contains the
representative of its hyperedge, and each support-changing edge toward
the larger support, caps the incoming weight at $2\ell$---at most
$\ell$ from each rule---so $\bd\le32\ell N^{1/s}$. 
To obtain the uniform statement of \Cref{thm:main} we choose the walk length
$s\asymp\ell\log(en/\ell)/k$: the factor $k$ keeps every window and closed
walk inside radius $O(\ell\log(en/\ell))$, at the price of an average degree larger by $N^{1/s}=e^{O(k)}$. Fortunately, this factor  is absorbed by the conversion of $|\cH|$ into $\bd$.  This proves the even case.


\paragraph{Odd uniformity: pairing hyperedges.}
Evenness of $k$ entered the argument in exactly one place: a row can
contain half of a hyperedge, so a single hyperedge preserves the $\ell$-slice.
For odd $k$ this fails. A natural solution is to consider walks that traverse hyperedges two at a
time, and then apply our even-$k$ techniques.  We make this rigorous with
a variant of the high-codegree decompositions of the odd-uniformity
Kikuchi arguments~\cite{GKM,HKM,HKMMS} by constructing a maximal packing of
families of hyperedges that share a common center.  

A genuinely new difficulty remains: a fixed row $S$ and hyperedge $F$ can lie on many
moves, one for each eligible partner hyperedge $F'$, while the orientation argument above,
which pays its bounds per row--hyperedge incidence, cannot afford
unboundedly many moves through a single incidence.  The arguments of~\cite{HKM,HKMMS} prune: \cite{HKM} retains only moves isolated at both endpoints, and \cite{HKMMS} only moves whose endpoints carry boundedly many moves of the same colour, inside a carefully restricted subgraph of an edge-colored Kikuchi graph; both must then show that most moves survive. We instead keep every move and assign it the weight
$1/\delta_e$, where $\delta_e$ is one plus the total excess ``contention''
of its four possible row--hyperedge incidences. The
same core technique as before---lifting the graph with a memory that keeps track of the hyperedges used an odd number of times so far, and choosing an orientation on that graph using Hall's marriage theorem---then proves  the case of odd uniformity.

\paragraph{Concurrent work.}
While preparing this manuscript for publication, an independent preprint by Bandeira, Kunisky, Nizi\'c-Nikolac, Pesenti, and Wang appeared on the arXiv, proving
the conjecture first for even $k$, and in a subsequent revision for all $k$~\cite{BKNPW}. Their elegant argument proceeds
through minimum-degree cores, neighborhood growth, and
polynomial interpolation. Our results were obtained independently and prior to the appearance of~\cite{BKNPW}, and are instead   based on counting trace walks on the Kikuchi graph.
Compared to~\cite{BKNPW}, this yields $k$-independent constants in \Cref{thm:main} and also finds applications to other problems such as those in our companion paper~\cite{SH}.

\paragraph{On the use of AI models.}
All proof ideas were conceived and developed by the authors. We have used Large Language Models  to assist with writing and proofreading. All mistakes are ours.

\section{Even uniformity}\label{sec:even}

Throughout this section and the next, we choose the hierarchy level $\ell$ to lie in the
\emph{central range}
\begin{equation}\label{eq:central}
        2k\le\ell,
        \qquad
        n\ge100\,k\ell,
\end{equation}
which is the interesting regime in the context of the hypergraph Moore bound. 
The boundary values of $\ell$ are treated in \cref{sec:completion}:
above the central range a short syndrome count suffices, and below it,
it is enough to run the present argument at the level $2k$ or the argument of \cref{sec:odd}.

Let $k=2r\ge4$ be even.  Set
\begin{equation}\label{eq:even-moment}
        s:=\Bigl\lceil\frac{3\ell}k\log\frac{en}\ell\Bigr\rceil,
        \qquad
        N:=\binom n\ell,
        \qquad\text{so that}\qquad
        N^{1/s}\le e^{k/3},
\end{equation}
since $\log N\le\ell\log(en/\ell)$.

\begin{theorem}\label{thm:even-main}
Assume \eqref{eq:central}.  If $\cH$ has no even cover of size at most
$5\ell\log(en/\ell)$, then
\[
        |\cH|\le45\,\frac{n^{k/2}}{\ell^{k/2-1}}.
\]
\end{theorem}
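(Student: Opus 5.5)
We follow the overview: build the Kikuchi graph $G$ on $\Om_\ell$, with $S\sim S'$ iff $S\sd S'\in\cH$, and reweight it to $K=\Gamma^{-1/2}A_G\Gamma^{-1/2}$ with $\Gamma=D+\bd I$. The proof then plays a lower bound on $\norm K$ against an upper bound from the girth hypothesis.

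\emph{Degree count and lower bound.} A hyperedge $F$ yields an edge at every row $S$ with $|S\cap F|=r$. So the number of (row, hyperedge) incidences is $|\cH|\binom kr\binom{n-k}{\ell-r}$, and
\[
\bd=|\cH|\binom kr\binom{n-k}{\ell-r}\Big/\binom n\ell .
\]
In the central range \eqref{eq:central} this gives $\bd\ge c_0^{k}\,|\cH|(\ell/n)^{k/2}$ for a suitable constant. Indeed, $\binom kr\ge 2^k/(k+1)$ roughly cancels the losses in $\binom{n-k}{\ell-r}/\binom n\ell\approx(\ell/n)^r(1-\ell/n)^{\ell}\cdots$. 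The constant $45$ is then fixed by how much of $e^{k/3}$ is absorbed here. The test vector $\Gamma^{1/2}\1$ gives $\norm K\ge\tfrac12$.

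\emph{Lifted operator.} Define states $(S,M)$ with $M\subseteq\cH$ and $|M|\le s$. A step along $F$ sends $(S,M)$ to $(S\sd F,M\sd\{F\})$ with the weight of $K$. Take any closed walk of length $2s$ at $S$. The hyperedges it uses an odd number of times XOR to $\varnothing$. There are at most $2s\le 5\ell\log(en/\ell)$ of them, since $s\le \tfrac{3\ell}{k}\log\frac{en}{\ell}+1$ and $k\ge4$. So by hypothesis there are none. Hence the lift starting at $(S,\varnothing)$ returns to $(S,\varnothing)$. Every hyperedge in the memory must be used at least twice, so $|M|\le s$ throughout and truncation loses nothing. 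This proves \eqref{eq:trace_main}: $2^{-2s}\le N\norm\Lop^{2s}$.

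\emph{Orientation lemma.} Suppose each lifted edge is oriented so that every state has incoming weight at most $\kappa$, weight counted in units of $A_G$. Write $\Lop=P+P^{\mathsf T}$ for the oriented parts. A Schur-test / Cauchy–Schwarz bound then gives $\norm{P}\le\sqrt{\kappa/\bd}$. The point is that $\Gamma\ge\bd I$, and that each state's outgoing $A_G$-degree, $\deg S$, is compensated by $\Gamma_S\ge\deg S$. Thus $\norm\Lop\le 2\sqrt{\kappa/\bd}$. Combining with \eqref{eq:trace_main} gives $\bd\le16\kappa N^{1/s}\le16\kappa e^{k/3}$.

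\emph{Orientation with $\kappa\le 2\ell$ (the main obstacle).} Define the window $W=S\cup\bigcup M$, so $|W|\le\ell+ks$. Every state reachable from empty memory within $s$ steps has such a window. Any set of hyperedges contained in a window of size $\le\ell+ks$ is $\F_2$-linearly independent. Otherwise a dependency gives an even cover, and we must show its size is below the forbidden threshold. This is the delicate point: a subset of a window can have up to $(\ell+ks)/1$ members, not $2s$. So I would instead restrict to hyperedges that actually occur, namely those in $M$ plus the step edges at the current row. Their number is at most $s+\ell$, up to constants, and this is at most $5\ell\log(en/\ell)$. Independence gives Hall's condition: any $t$ independent $k$-sets span at least $t$ vertices. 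Hence there is a system of distinct representatives, consistent across a given window.

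Orient support-preserving edges toward the endpoint whose row contains the representative of $F$. Each row has at most $\ell$ elements, hence at most $\ell$ incoming edges of this kind. Orient support-changing edges toward the larger support. The entering hyperedges then have their new vertices in $W\setminus S$, and I would bound their incoming weight by $\ell$ via the representatives inside $S$. Making the representatives consistent between the two endpoints of an edge, which may have different windows, is the step I expect to need the most care. One option is to define the representative via the smaller-support endpoint's window. This yields $\bd\le 32\ell e^{k/3}$. Inverting the degree count then gives $|\cH|\le45\,n^{k/2}/\ell^{k/2-1}$, provided the $e^{k/3}$ is dominated by the $\binom kr$ gain.
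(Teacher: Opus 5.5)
\textbf{There is a genuine gap.} Your architecture is the paper's: the Kikuchi graph, $\Gamma=D+\bd I$, the parity lift, the orientation lemma, and an orientation by windows and Hall's theorem. But the sharp orientation with incoming weight $2\ell$, which you flag as the main obstacle, is never actually established, and the workaround you propose fails.

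\emph{Window independence.} Independence of $\cH[W]$ for $|W|\le\ell+ks$ needs no restriction of the family. A \emph{minimal} linear dependency among vectors of $\F_2^{W}$ has at most $|W|+1$ members, since dropping any one member leaves an independent set. Its members XOR to $\varnothing$, so it is an even cover of size at most $\ell+ks+1$. Since $ks\le3\ell\log(en/\ell)+k$ and $\ell\ge2k$, this is at most $5\ell\log(en/\ell)$, which is forbidden. The threshold you must beat is therefore $\max\{2s,\ell+ks+1\}$, not $2s$.

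\emph{Your workaround.} You propose restricting to the labels in $M$ plus ``the step edges at the current row'', with at most $s+\ell$ of them. This count is false. The hyperedges usable at a row $S$ number $d(S)$, which is on average $\bd$, precisely the quantity that is large in the regime you are trying to rule out. Choosing representatives per state would also make the orientation ill-defined, since the two endpoints of an edge have different $(S,M)$.

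\emph{Consistency.} Your consistency worry disappears once representatives are indexed by windows. Fix one system of distinct representatives $\rho_W$ of $\cH[W]$ for each $W$. An equal-support edge has, by definition, the same window $W$ at both ends. Its label $F\subseteq W$ lies in $\cH[W]$, and $\rho_W(F)\in F=S_x\sd S_y$ lies in exactly one endpoint row.

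\emph{Unequal-support edges.} Here no representatives are needed, and your description is off: the new vertices lie in $S$, not in $W\setminus S$. The missing argument runs as follows.
\begin{enumerate}
\item Supports are nested along an edge: if $F$ enters the memory, then $W(x)=W(y)\cup F$. So ``toward the larger support'' is well defined, and the head $x$ is the endpoint whose memory gained $F$.
\item For an in-edge $y\to x$, any $v\in W(x)\setminus W(y)$ satisfies $v\in F\setminus S_y\subseteq S_y\sd F=S_x$.
\item $F$ is the only member of $M_x$ containing $v$: every other member lies in $M_y$ and would put $v\in W(y)$.
\item Taking $v$ to be the smallest such vertex, $v$ determines $F$ and hence the tail $y=(S\sd F,M\sd\{F\})$. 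So there are at most $\ell$ such in-edges.
\end{enumerate}

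\emph{A smaller slip in the density count.} The ratio is $\binom{n-k}{\ell-r}/\binom n\ell=(\ell)_r(n-\ell)_r/(n)_{2r}$, which is at least $\frac9{10}(2/e)^r(\ell/n)^r$ when $\ell\ge2k$. There is no $(1-\ell/n)^\ell$ factor; such a factor would be ruinous when $\ell^2\gg n$. Combined with $\binom{2r}{r}\ge4^r/(2\sqrt r)$, this gives $\bd\ge\frac9{20\sqrt r}(8/e)^r(\ell/n)^r|\cH|$. Against $\bd\le32\ell e^{2r/3}$, this yields the constant $45$, because $\sqrt r\,(e^{5/3}/8)^r\le5/8$ for $r\ge2$.
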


\subsection{The Kikuchi graph and normalization}

Let $G$ be the graph on the slice $\Om_\ell:=\binom{[n]}\ell$ in which
a hyperedge $F$ joins $S$ to $S\sd F$ whenever $|S\cap F|=r$.  Write
$A_G$ for its adjacency matrix, $d(S)$ for its degree, and
$\bd:=N^{-1}\sum_Sd(S)$.  Counting the rows adjacent through a fixed
$F$ gives
\begin{equation}\label{eq:even-degree-exact}
 \bd=|\cH|\binom{2r}{r}
       \frac{\binom{n-2r}{\ell-r}}{\binom n\ell}.
\end{equation}

The slice ratio in \eqref{eq:even-degree-exact} is estimated by the
following bound, stated for general $q$ because the odd case will
reuse it.  
To obtain $k$-independent constants throughout, we accurately keep track of factors involving $k$ or $q$.    

\begin{lemma}\label{lem:ratio}
Let $1\le q\le k$.  Prescribing $q$ vertices inside a row and $q$
vertices outside it retains a definite fraction of the slice:
\[
 \frac{\binom{n-2q}{\ell-q}}{\binom n\ell}
 \ \ge\ \frac9{10}\Bigl(\frac2e\Bigr)^{q}
 \Bigl(\frac\ell n\Bigr)^{q}.
\]
\end{lemma}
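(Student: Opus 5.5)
The plan is to expand the ratio into finite products and bound each factor separately, using only the central range \eqref{eq:central}. Cancelling factorials, and writing $n^{\underline m}:=n(n-1)\cdots(n-m+1)$ for the falling factorial (notation used only here),
\[
 \frac{\binom{n-2q}{\ell-q}}{\binom n\ell}
 =\frac{\ell^{\underline q}\,(n-\ell)^{\underline q}}{n^{\underline{2q}}}
 =\prod_{i=0}^{q-1}(\ell-i)\cdot\frac{\prod_{i=0}^{q-1}(n-\ell-i)}{\prod_{j=0}^{2q-1}(n-j)} .
\]
I will show that the first product is at least $(2/e)^q\ell^q$, and that the remaining quotient is at least $\tfrac9{10}n^{-q}$.

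\emph{Row factor.} Since $q\le k\le\ell/2$ by \eqref{eq:central}, each factor satisfies $1-i/\ell\ge1-i/(2q)$. Hence
\[
 \prod_{i<q}\Bigl(1-\frac i\ell\Bigr)\ \ge\ \prod_{i<q}\Bigl(1-\frac i{2q}\Bigr)=\prod_{j=q+1}^{2q}\frac j{2q}.
\]
The function $x\mapsto\log(x/2q)$ is increasing, so the sum of its values at $j=q+1,\dots,2q$ dominates its integral over $[q,2q]$. That integral equals
\[
 2q\int_{1/2}^1\log x\,dx=q(\log2-1).
\]
This gives the factor $(2/e)^q$ exactly, with no Stirling error terms to control. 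The step that needs care is keeping the constant $k$-independent, and this comparison of the sum with the integral handles it cleanly.

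\emph{Complement factor.} Every denominator factor satisfies $n-j\le n$, and every numerator factor satisfies $n-\ell-i\ge n-\ell-q$. Bounding $2q$ denominator factors by $n$ and $q$ numerator factors from below, the quotient is at least
\[
 n^{-q}\Bigl(1-\frac{\ell+q}{n}\Bigr)^q\ \ge\ n^{-q}\Bigl(1-\frac{q(\ell+q)}{n}\Bigr)
\]
by Bernoulli's inequality. Using $q\le k$, $q\le\ell/2$ and $n\ge100k\ell$, we get $q(\ell+q)/n\le\tfrac32k\ell/(100k\ell)<\tfrac1{10}$. So the quotient is at least $\tfrac9{10}n^{-q}$.

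Multiplying the two bounds gives $\tfrac9{10}(2/e)^q(\ell/n)^q$, which is the claimed inequality.
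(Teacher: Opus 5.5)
Your proof is correct and follows essentially the same route as the paper's. It uses the same factorization of the ratio into falling factorials, the same Bernoulli bound on the complement factor via $n\ge100k\ell$, and the same reduction of the row factor to the worst case $\ell=2q$. The only difference is how you bound $(2q)!/(q!\,(2q)^q)$: you compare $\sum_{j=q+1}^{2q}\log(j/2q)$ with an integral, which gives $(2/e)^q$ directly, whereas the paper uses explicit Stirling bounds, which give the slightly stronger $(2\sqrt\pi/e)(2/e)^q$ before the unneeded prefactor is dropped.
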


\begin{proof}
Writing $(x)_j:=x(x-1)\cdots(x-j+1)$ for the falling factorial, one has
the identity
\[
 \Bigl(\frac n\ell\Bigr)^{q}
 \frac{\binom{n-2q}{\ell-q}}{\binom n\ell}
 =\frac{(\ell)_q}{\ell^q}\,
   \frac{(n-\ell)_q}{n^q}\,
   \frac{n^{2q}}{(n)_{2q}} .
\]
The third factor is at least $1$, and the second is at least
$(1-(\ell+q)/n)^q\ge1-2q\ell/n\ge1-\tfrac2{100}$ by
\eqref{eq:central}.  The first factor increases with $\ell$, and
$\ell\ge2k\ge2q$ by \eqref{eq:central}, so it suffices to bound it at
$\ell=2q$, where Stirling,
$(2q)!\ge\sqrt{4\pi q}\,(2q/e)^{2q}$ and $q!\le e\sqrt q\,(q/e)^q$,
gives
\[
 \frac{(2q)_q}{(2q)^q}
 =\frac{(2q)!}{q!\,(2q)^q}
 \ge\frac{2\sqrt\pi}{e}\Bigl(\frac2e\Bigr)^q
 \ge\Bigl(\frac2e\Bigr)^q. \qedhere
\]
\end{proof}

\begin{lemma}\label{lem:even-density}
The Kikuchi graph converts hyperedges into degree according to
\[
        \bd\ge\frac9{20\sqrt r}\Bigl(\frac8e\Bigr)^{r}
        \Bigl(\frac\ell n\Bigr)^{r}|\cH| .
\]
\end{lemma}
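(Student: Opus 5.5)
Starting from the exact count \eqref{eq:even-degree-exact}, $\bd=|\cH|\binom{2r}{r}\binom{n-2r}{\ell-r}/\binom n\ell$, the proof splits into two independent lower bounds, one for each nontrivial factor. First apply \cref{lem:ratio} with $q=r$; this is allowed because $1\le r\le k$. It gives
\[
 \frac{\binom{n-2r}{\ell-r}}{\binom n\ell}\ge\frac9{10}\Bigl(\frac2e\Bigr)^r\Bigl(\frac\ell n\Bigr)^r .
\]
The central range hypothesis \eqref{eq:central} is already absorbed into that lemma, so no further assumptions on $\ell$ or $n$ are needed.

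Next, bound the central binomial coefficient from below. The target is
\[
 \binom{2r}{r}\ge\frac{4^r}{2\sqrt r},
\]
since multiplying this by $\frac9{10}(2/e)^r$ gives exactly $\frac{9}{20\sqrt r}(8/e)^r$. The bound follows from the standard product estimate. Write
\[
 \binom{2r}{r}4^{-r}=\prod_{j=1}^r\frac{2j-1}{2j}.
\]
The square of this product is at least
\[
 \frac12\prod_{j=2}^r\frac{(2j-1)^2}{(2j)^2}\ge\frac12\prod_{j=2}^r\frac{2j-2}{2j}=\frac1{4r},
\]
using $(2j-1)^2\ge(2j-2)(2j)$ and a telescoping product. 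Taking square roots gives $\binom{2r}{r}\ge 4^r/(2\sqrt r)$, valid for all $r\ge1$ (one checks $r=1$ directly: $2\ge 2/2$). Alternatively, one can use Stirling in the same form as in the proof of \cref{lem:ratio}, but the elementary product is cleaner and keeps the constant exactly $\tfrac12$.

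Combining the two factor bounds with \eqref{eq:even-degree-exact} yields the claim. There is no real obstacle. The only point needing care is getting the central binomial constant to be precisely $\frac1{2\sqrt r}$ rather than a weaker constant, and the telescoping argument handles that.
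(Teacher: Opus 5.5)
Your proposal is the paper's proof. You use the exact count \eqref{eq:even-degree-exact}, \cref{lem:ratio} at $q=r$, and $\binom{2r}{r}\ge4^r/(2\sqrt r)$. Your inequality $(2j-1)^2\ge(2j-2)(2j)$ is the paper's inductive step $(2r+1)^2\ge4r(r+1)$ (take $j=r+1$), written out as a product.

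The displayed chain for the central binomial coefficient has two slips that happen to cancel:
\begin{enumerate}
\item The $j=1$ factor of the squared product is $(1/2)^2=\tfrac14$, not $\tfrac12$. So the first inequality ``$\ge\frac12\prod_{j=2}^r\cdots$'' is false as written; at $r=1$ it would claim $\tfrac14\ge\tfrac12$.
\item $\frac12\prod_{j=2}^r\frac{2j-2}{2j}$ telescopes to $\frac1{2r}$, not $\frac1{4r}$.
\end{enumerate}
The correct chain is
\[
\Bigl(\binom{2r}{r}4^{-r}\Bigr)^2=\frac14\prod_{j=2}^r\frac{(2j-1)^2}{(2j)^2}\ge\frac14\prod_{j=2}^r\frac{j-1}{j}=\frac1{4r}.
\]
This gives the bound for all $r\ge1$. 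Your $r=1$ check should read $\binom{2}{1}=2\ge4/2$, though $r\ge2$ here anyway since $k\ge4$. This is exactly the step you singled out as needing care, so write it correctly; with that fix the argument is complete.
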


\begin{proof}
Combine \eqref{eq:even-degree-exact}, \cref{lem:ratio} with $q=r$, and
the standard bound $\binom{2r}{r}\ge4^r/(2\sqrt r)$, which follows by
induction from $(2r+1)^2\ge4r(r+1)$.
\end{proof}

Assume $\cH\ne\varnothing$, so $\bd>0$, and put
$\Gamma:=\diag(d(S))+\bd I$ and $K:=\Gamma^{-1/2}A_G\Gamma^{-1/2}$.
Testing on the vector $\Gamma^{1/2}\1$ gives the universal lower bound
$\norm K\ge\1^{\mathsf T}A_G\1/\1^{\mathsf T}\Gamma\1=\frac12$.

\subsection{The parity lift}

The truncated lift has states $(S,M)$ with $S\in\Om_\ell$,
$M\subseteq\cH$, and $|M|\le s$.  A step labeled by $F$ sends
$(S,M)\leftrightarrow(S\sd F,M\sd\{F\})$ with normalized weight
$(\Gamma(S)\Gamma(S\sd F))^{-1/2}$.  We restrict to the union of the
components containing a state $(S,\varnothing)$ with empty memory, and
denote the resulting symmetric operator by $\Lop$.

\begin{lemma}\label{lem:even-trace}
If $\cH$ has no even cover of size at most $2s$, then
\[
 \Tr K^{2s}
 =\sum_{S\in\Om_\ell}
 \bigl\langle(S,\varnothing)\bigr|\,\Lop^{2s}\,
 \bigl|(S,\varnothing)\bigr\rangle.
\]
\end{lemma}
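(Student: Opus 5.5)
Both sides of the identity are weighted sums over closed walks, so the plan is to put the walks in weight-preserving bijection. Expanding $\Tr K^{2s}=\sum_S\langle S|K^{2s}|S\rangle$ writes it as a sum over closed walks $S=S_0,S_1,\dots,S_{2s}=S$ in $G$. Each step carries the weight $(\Gamma(S_{i-1})\Gamma(S_i))^{-1/2}$. A step is specified by its hyperedge label $F_i$, which is determined by the pair of rows, since $F_i=S_{i-1}\sd S_i$. Similarly, $\langle(S,\varnothing)|\Lop^{2s}|(S,\varnothing)\rangle$ is a sum over closed walks in the truncated lift that start and end at $(S,\varnothing)$, with the same step weights.

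The map from the lift down is simple projection $(S_i,M_i)\mapsto S_i$. Because $M_i$ is determined by the history, $M_i=\{F:F\text{ used an odd number of times among }F_1,\dots,F_i\}$, this projection is injective. So the whole argument reduces to showing that every closed walk of length $2s$ in $G$ lifts to a legal closed walk in the truncated lift, starting and ending at empty memory.

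\textbf{Step 1: the lifted walk closes.} Given a closed walk in $G$ at $S$, set $M_0=\varnothing$ and $M_i=M_{i-1}\sd\{F_i\}$. Telescoping gives $\bigsd_{i}F_i=S_0\sd S_{2s}=\varnothing$. Hyperedges used an even number of times cancel, so $\bigsd_{F\in M_{2s}}F=\varnothing$. If $M_{2s}\neq\varnothing$, then $M_{2s}$ is an even cover of size at most $2s$, which the hypothesis forbids. Hence $M_{2s}=\varnothing$: every hyperedge is used an even number of times, and the lifted walk returns to $(S,\varnothing)$.

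\textbf{Step 2: the memory never exceeds $s$.} Since every hyperedge is used an even number of times in total, each $F\in M_i$ has been used an odd number of times up to time $i$. It therefore has at least one use among $F_1,\dots,F_i$ and at least one among $F_{i+1},\dots,F_{2s}$. This gives $|M_i|\le i$ and $|M_i|\le 2s-i$, hence $|M_i|\le s$, so every intermediate state survives the truncation.

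\textbf{Step 3: the components match.} The lifted walk begins at $(S,\varnothing)$, so all its states lie in the component of a state with empty memory, i.e.\ in the domain of $\Lop$. The weights agree step by step because they depend only on $S_{i-1}$ and $S_i$. Conversely, every closed walk of $\Lop$ at $(S,\varnothing)$ projects to a closed walk in $G$ at $S$; note that $|S_i\cap F_i|=r$ holds automatically, since the lifted edges are exactly those of $G$. The two sums therefore coincide term by term.

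The only genuinely substantive point is Step 2. The naive bound $|M_i|\le i$ is not enough for the middle of the walk, and the fix is to use the conclusion of Step 1 (every hyperedge is used an even number of times) to also obtain $|M_i|\le 2s-i$.
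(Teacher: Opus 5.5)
Your proposal is correct and follows the paper's proof essentially step for step. Both set up a weight-preserving bijection between closed base walks and closed lifted walks at $(S,\varnothing)$, with the lift unique because labels and memory are determined by the history. Both then use the cover hypothesis to force the odd-label set to be empty, and bound the memory by $\min\{j,2s-j\}\le s$, so the truncation discards nothing.
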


\begin{proof}
Both sides are weighted counts of closed $2s$-step walks, and we
exhibit a weight-preserving bijection between the two families.  Since
$G$ is simple, a closed $2s$-step walk at $S\in\Om_\ell$ determines,
and is determined by, its sequence of labels; it therefore lifts to a
unique walk of the lifted graph started at $(S,\varnothing)$, of the
same weight, and conversely a closed lifted walk at $(S,\varnothing)$
projects to a closed base walk at $S$.  The two maps are mutually
inverse, so it remains only to check that the lift of a closed base
walk is again closed and is not discarded by the truncation.

Let $Q$ be the set of labels used an odd number of times.  Row closure
gives $\bigsd_{F\in Q}F=\varnothing$, so the cover hypothesis forces
$Q=\varnothing$: the lift ends with empty memory.  Consequently, at
time $j$ the memory has size at most $j$, and also at most $2s-j$,
because every hyperedge held in memory must be used again before the
walk closes; hence it never exceeds $s$.  The same bound holds
automatically along a closed lifted walk at $(S,\varnothing)$, so the
truncation discards nothing.
\end{proof}

\begin{lemma}\label{lem:orientation}
Suppose the lifted edges are oriented so that every state has in-degree
at most $\kappa$.  Then $\norm\Lop\le2\sqrt{\kappa/\bd}$.  The same holds for
nonnegative edge weights, with in-degree replaced by incoming weight
and with the row degree $d(S)$ weighted accordingly.
\end{lemma}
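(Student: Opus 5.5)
The plan is to bound the quadratic form $|\langle x,\Lop x\rangle|$ edge by edge, using the orientation to decide which endpoint pays for each edge. Since $\Lop$ is symmetric, $\norm\Lop=\sup_{\norm x=1}|\langle x,\Lop x\rangle|$. Write each lifted edge $e$ as $u\to v$, where $u=(S,M)$, $v=(S\sd F,M\sd\{F\})$, and the weight is $w_e=(\Gamma(S)\Gamma(S\sd F))^{-1/2}$. Then
\[
 |\langle x,\Lop x\rangle|\le 2\sum_{e=u\to v} w_e\,|x_u||x_v|.
\]
For each edge we apply the weighted AM--GM inequality $2|x_u||x_v|\le \lambda_e x_u^2+\lambda_e^{-1}x_v^2$ with a parameter $\lambda_e>0$ still to be chosen. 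The idea is that the head $v$ pays only a bounded amount per incoming edge, while the tail $u$ pays in proportion to its base degree, which is then cancelled by $\Gamma$.

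Choose $\lambda_e:=\sqrt{\Gamma(S)/\Gamma(S\sd F)}\cdot t^{-1}$, writing $S(u)$ for the row of $u$, and with a scalar $t>0$ to be optimized at the end. With this choice, the tail $u$ is charged $w_e\lambda_e x_u^2=t^{-1}x_u^2/\Gamma(S(u))$, and the head is charged $w_e\lambda_e^{-1}x_v^2=t\,x_v^2/\Gamma(S(v))$. Now sum the charges at a fixed state $u$.

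\emph{Outgoing charges.} Every lifted edge at $(S,M)$ projects to a distinct base edge at $S$, so the number of lifted edges at $u$ is at most $d(S)$. The outgoing charge at $u$ is therefore at most $t^{-1}d(S)/\Gamma(S)\le t^{-1}$.

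\emph{Incoming charges.} The number of incoming edges at $v$ is at most $\kappa$. Since $\Gamma\ge\bd$, the incoming charge at $v$ is at most $t\kappa/\bd$.

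Adding the two contributions gives
\[
 |\langle x,\Lop x\rangle|\le (t^{-1}+t\kappa/\bd)\norm x^2 .
\]
Optimizing with $t=\sqrt{\bd/\kappa}$ yields $2\sqrt{\kappa/\bd}$. The constant $2$ is exactly this optimized sum, and I would double-check that no further factor of $2$ arises from double counting. Each undirected edge $e$ appears twice in $\langle x,\Lop x\rangle$, which gives the sum $\sum_e 2w_e x_ux_v$, and that factor $2$ is exactly what AM--GM absorbs. So the constant is right.

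For the weighted version, each edge carries an additional nonnegative weight $c_e$ multiplying $w_e$, and $d(S)$ becomes $\sum_{e\ni S}c_e$, with $\Gamma$ built from that weighted degree. The same computation goes through unchanged. The outgoing charge is at most $t^{-1}\sum c_e/\Gamma(S)\le t^{-1}$, where we use that lifted edges at $u$ inject into base edges at $S$ with the same $c_e$. The incoming charge is at most $t\kappa/\bd$ once ``in-degree'' means $\sum_{e\text{ into }v}c_e$.

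The only delicate point, and so the main obstacle, is the injectivity of lifted edges at a state into base edges at its row. This holds because, from a given state, the label $F$ determines the move, and the truncation and the restriction to components only remove edges. Everything else is routine.
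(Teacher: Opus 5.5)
Your argument is the paper's: split each oriented edge term by AM--GM with a global parameter $t$, and charge the head $t\,x_v^2/\Gamma(S(v))$. There are at most $\kappa$ such charges, and $\Gamma\ge\bd$. The tail is charged $t^{-1}x_u^2/\Gamma(S(u))$, and there are at most $d(S(u))\le\Gamma(S(u))$ such charges, because lifted edges at a state inject into base edges at its row. Then take $t=\sqrt{\bd/\kappa}$. The weighted version is handled the same way in both.

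There is a slip in your choice of $\lambda_e$. With $\lambda_e=t^{-1}\sqrt{\Gamma(S)/\Gamma(S\sd F)}$ you get $w_e\lambda_e=t^{-1}/\Gamma(S\sd F)$ and $w_e\lambda_e^{-1}=t/\Gamma(S)$. So each endpoint is charged with the \emph{other} endpoint's $\Gamma$. The outgoing sum at $u$ could then only be bounded by $t^{-1}\deg^+(u)/\bd$, which is not controlled.

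The ratio must be inverted: $\lambda_e=t^{-1}\sqrt{\Gamma(S\sd F)/\Gamma(S)}$. Equivalently, apply AM--GM directly to $|x_u|/\sqrt{\Gamma(S(u))}$ and $|x_v|/\sqrt{\Gamma(S(v))}$, as the paper does. This gives exactly the charges you wrote, and the rest of your argument then goes through unchanged.
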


\begin{proof}
Write $S_x$ for the row of a lifted state $x$.  Since $\Lop$ is symmetric,
$\norm\Lop=\sup\{|\langle f,\Lop f\rangle|:\norm f=1\}$, and since the
entries of $\Lop$ are nonnegative, $|\langle f,\Lop f\rangle|\le\langle
|f|,\Lop|f|\rangle$; we may therefore assume $f\ge0$.  Each of the lifted graph's edges contributes to exactly
two entries of $\Lop$, and, summing over the edges with their given
orientation,
\[
 \langle f,\Lop f\rangle
 =\sum_{y\to x}\frac{2f(x)f(y)}{\sqrt{\Gamma(S_x)\Gamma(S_y)}} .
\]
For $t>0$ the arithmetic--geometric mean inequality bounds each summand by
\[
 \frac{2f(x)f(y)}{\sqrt{\Gamma(S_x)\Gamma(S_y)}}
 \le t\frac{f(x)^2}{\Gamma(S_x)}
      +t^{-1}\frac{f(y)^2}{\Gamma(S_y)} ,
\]
and collecting the two families of terms by their head and by their tail,
\[
 \langle f,\Lop f\rangle
 \le t\sum_{x}\frac{\deg^-(x)}{\Gamma(S_x)}f(x)^2
  +t^{-1}\sum_{y}\frac{\deg^+(y)}{\Gamma(S_y)}f(y)^2 ,
\]
where $\deg^\mp$ denote the in- and out-degrees of the orientation.  In the
first sum $\deg^-(x)\le\kappa$ and $\Gamma(S_x)\ge\bd$, so it is at most
$(\kappa/\bd)\norm f^2$; in the second, $\deg^+(y)$ is at most the full lifted
degree at $y$, which is at most $d(S_y)\le\Gamma(S_y)$, so it is at most
$\norm f^2$.  Hence $\norm\Lop\le t\kappa/\bd+t^{-1}$ for every $t>0$, and
$t=\sqrt{\bd/\kappa}$ gives $\norm\Lop\le2\sqrt{\kappa/\bd}$.  In the weighted case
each edge carries its weight $w_e$ through the same computation: in-degrees
become incoming weights, bounded by $\kappa$, and the out-weight at $y$ is
bounded by the weighted degree $d(S_y)$.
\end{proof}

\subsection{Small windows and the sharp orientation}

For a lifted state $x=(S,M)$, write $W(x):=S\cup\bigcup_{F\in M}F$;
since $|M|\le s$, we have $|W(x)|\le\ell+ks$.

\begin{lemma}\label{lem:even-window}
Suppose $\cH$ has no even cover of size at most $\ell+ks+1$.  Then for
every $W\subseteq[n]$ with $|W|\le\ell+ks$, the hyperedges of
$\cH[W]:=\{F\in\cH:F\subseteq W\}$ admit a system of distinct
representatives.
\end{lemma}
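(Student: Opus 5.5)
We argue in two stages: first show that the hyperedges in $\cH[W]$ are linearly independent over $\F_2$, and then deduce Hall's condition from independence via a rank count.

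\textbf{Linear independence.} Identify each $F\in\cH[W]$ with its indicator vector $\1_F\in\F_2^{W}$. Suppose some nonempty subfamily $\cC\subseteq\cH[W]$ satisfies $\sum_{F\in\cC}\1_F=0$, that is, $\bigsd_{F\in\cC}F=\varnothing$. Then $\cC$ is an even cover. Choose a minimal nonempty dependent subfamily, so that $\cC$ is a circuit of the binary matroid on $\cH[W]$. A circuit in a matroid of rank at most $|W|$ has at most $|W|+1$ elements, since any $|W|+1$ vectors in $\F_2^{W}$ already contain a dependency. Hence $|\cC|\le|W|+1\le\ell+ks+1$. This contradicts the hypothesis, so $\cH[W]$ is independent. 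In particular $|\cH[W]|\le|W|$.

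\textbf{Hall's condition.} For any subfamily $\cB\subseteq\cH[W]$, let $U:=\bigcup_{F\in\cB}F\subseteq W$. The vectors $\{\1_F:F\in\cB\}$ are independent and all lie in $\F_2^{U}$, so $|\cB|\le\dim\F_2^{U}=|U|$. This is exactly Hall's condition for the bipartite incidence graph between $\cH[W]$ and $W$. Hall's marriage theorem then gives an injection $\phi:\cH[W]\to W$ with $\phi(F)\in F$ for every $F$, which is a system of distinct representatives.

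There is no real obstacle. The only point needing care is the size bookkeeping: we must apply the no-even-cover hypothesis to a minimal dependency, whose size is at most $\operatorname{rank}+1\le|W|+1$. This is why the hypothesis is stated with $\ell+ks+1$ rather than $\ell+ks$. The case where $\cH[W]$ is empty is vacuous.
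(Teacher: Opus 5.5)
Your proof is correct and follows the paper's argument essentially step for step. You first show $\F_2$-independence, since a minimal dependency has at most $|W|+1$ members and would be a forbidden even cover. You then obtain Hall's condition because independent vectors supported on $U$ number at most $|U|$, and conclude by Hall's marriage theorem.
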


\begin{proof}
A dependence among the corresponding incidence vectors contains a
minimal dependence of size at most $|W|+1$, hence a forbidden even
cover.  Thus the vectors are independent.  Every subfamily is supported
on its union, so its size is at most the size of that union; this is
Hall's condition, and by Hall's marriage theorem implies the existence of a system of disting representatives.
\end{proof}

Along a lifted edge the two supports are nested: if $F$ lies in the
memory of $x$ but not in that of $y$, then $S_x=S_y\sd F$ and
$(S_y\sd F)\cup F=S_y\cup F$, so $W(x)=W(y)\cup F\supseteq W(y)$.  (This
is special to the even case; in \cref{sec:odd} a step may enter one
label and remove the other.)  Fix one
representative system $\rho_W$ for every window $W$ with $|W|\le\ell+ks$,
and orient a lifted edge labeled by $F$ as follows.  If its endpoint
supports agree, say $W(x)=W(y)=W$, orient toward the endpoint whose row
contains $\rho_W(F)$ (it cannot be contained in both).  If the supports differ, orient toward the larger
one.  In both cases we call the vertex responsible for the choice the
\emph{witness} of the edge: it is $\rho_W(F)$ when the supports agree,
and the smallest vertex of $W(x)\setminus W(y)$, with $x$ the head, when
they differ.

\begin{lemma}\label{lem:even-indegree}
Suppose $\cH$ has no even cover of size at most
$\max\{2s,\ \ell+ks+1\}$.  Then every lifted state has in-degree at
most $2\ell$ under the above orientation, and
\begin{equation}\label{eq:even-spectral}
        \norm K\ \le\ 2\sqrt{\frac{2\ell}{\bd}}\;N^{1/(2s)} .
\end{equation}
\end{lemma}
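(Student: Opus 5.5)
We bound the in-degree of a fixed lifted state $x=(S,M)$, with $W:=W(x)$, by splitting its incoming edges into two classes, support-preserving and support-increasing, and showing that each class contributes at most $\ell$. The spectral bound \eqref{eq:even-spectral} will then follow by combining the three earlier lemmas:
\[
2^{-2s}\le\norm K^{2s}\le\Tr K^{2s}=\sum_S\langle(S,\varnothing)|\Lop^{2s}|(S,\varnothing)\rangle\le N\norm\Lop^{2s}\le N\bigl(2\sqrt{2\ell/\bd}\bigr)^{2s},
\]
using \cref{lem:even-trace} (valid since there is no even cover of size at most $2s$) and \cref{lem:orientation} with $\kappa=2\ell$. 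Taking $2s$-th roots gives $\norm K\le 2\sqrt{2\ell/\bd}\,N^{1/(2s)}$. Actually, the middle of the chain already gives the stated bound once we use $\Tr K^{2s}\le N\norm\Lop^{2s}$, which holds because each diagonal entry of $\Lop^{2s}$ is at most $\norm\Lop^{2s}$.

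\textbf{Support-preserving incoming edges.} Every window reachable in the lift has size at most $\ell+ks$, so $\rho_W$ is defined by \cref{lem:even-window}, which applies since there is no even cover of size at most $\ell+ks+1$. For such an edge $y\to x$ labelled $F$, the orientation rule says that $\rho_W(F)\in S_x=S$. Because $\rho_W$ is injective on $\cH[W]$, the map $F\mapsto\rho_W(F)$ sends the labels of these edges injectively into $S$. We also need that the label $F$ determines the edge at $x$: the neighbour of $x$ along $F$ is exactly $(S\sd F,M\sd\{F\})$. We should also check that $F\subseteq W$, so that $\rho_W(F)$ makes sense. This holds because $W(y)=W(x)$ and one of the two endpoints has $F$ in its memory. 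Together these give at most $|S|=\ell$ such edges.

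\textbf{Support-increasing incoming edges.} Here $x$ is the head, so $W(x)\supsetneq W(y)$. By the nestedness observation, $F\in M$, $y=(S\sd F,M\setminus\{F\})$, and $W(x)=W(y)\cup F$. Hence the witness $v$, the smallest vertex of $W(x)\setminus W(y)$, lies in $F$ but in no other hyperedge of $M$ and not in $S_y=S\sd F$. Since $v\in F$ and $v\notin S\sd F$, we get $v\in S$. The witness therefore lies in $S\cap F$ and is covered by no other member of $M$. The map $F\mapsto v$ is then injective: if two distinct labels $F,F'\in M$ had the same witness $v$, then $v\in F'$ would contradict the fact that $v$ is covered by no member of $M$ other than $F$. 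This gives at most $\ell$ such edges.

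The main point needing care is this second injectivity step, together with confirming that the nested-support dichotomy covers every edge. Edges where $F$ enters the memory of the head are handled by the same witness argument. Adding the two classes gives in-degree at most $2\ell$.
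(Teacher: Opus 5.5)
Your proposal is correct and follows the paper's route. Equal-support in-edges are injected into $S$ through $\rho_W(F)$, and unequal-support in-edges through the witness $v\in S\cap F$, which lies in no other member of $M$; the spectral bound then follows from \cref{lem:orientation} with $\kappa=2\ell$ and the trace identity of \cref{lem:even-trace}. The only difference is cosmetic: you use the nestedness observation up front to get $F\in M$ and $W(x)=W(y)\cup F$, whereas the paper derives $v\in S$, then $F\in M$, then uniqueness by a short case analysis.
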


\begin{proof}
The hypothesis contains that of \cref{lem:even-window}, so the
representative systems $\rho_W$ exist and the orientation is defined.
Fix a state $x=(S,M)$.  Equal-support in-edges have distinct labels and
hence distinct witnesses in the $\ell$-set $S$, giving at most $\ell$ of
them.

For an unequal-support in-edge $e\colon y\to x$, let
$v\in W(x)\setminus W(y)$ be its witness and let $F$ be its label.  First, $v\in S$: otherwise $v$ lies in
some $F'\in M$; if $F'\ne F$ then $F'\in M_y$ and $v\in W(y)$, while if
$F'=F$ then, since the step moves exactly the vertices of $F$,
$v\notin S$ forces $v\in S_y\subseteq W(y)$.  Next, $v\in S\setminus
S_y\subseteq F$, and $F\in M$: had the step removed $F$ from the
memory, $F\in M_y$ would give $v\in W(y)$.  Finally, $F$ is the unique
member of $M$ containing $v$, because every other member of $M$ lies in
$M_y$.  Hence $v$ determines the label $F$ and thereby the tail
$y=(S\sd F,M\sd\{F\})$: two unequal-support in-edges with the same $v$
coincide.  This gives at most $\ell$ further in-edges.

Finally, \cref{lem:orientation} turns the in-degree bound into
$\norm\Lop\le2\sqrt{2\ell/\bd}$, so that, by \cref{lem:even-trace},
\begin{equation}\label{eq:even-chain}
 \norm K^{2s}\le\Tr K^{2s}
 =\sum_{S\in\Om_\ell}\bigl\langle(S,\varnothing)\bigr|\,\Lop^{2s}\,
 \bigl|(S,\varnothing)\bigr\rangle
 \le N\norm\Lop^{2s}
 \le N\Bigl(\frac{8\ell}{\bd}\Bigr)^{s} .
\end{equation}
Taking $2s$-th roots gives \eqref{eq:even-spectral}.
\end{proof}

\begin{proof}[Proof of \cref{thm:even-main}]
We may assume $\cH\ne\varnothing$, so that $\bd>0$.  Suppose $\cH$ has
no even cover of size at most $5\ell\log(en/\ell)$.  Since $\ell\ge2k$,
$k\ge4$ and $\log(en/\ell)\ge1$,
\[
        \max\{2s,\ \ell+ks+1\}
        \le\ell+3\ell\log(en/\ell)+k+1
        \le5\ell\log(en/\ell),
\]
so \cref{lem:even-indegree} applies.  Combining its upper bound
\eqref{eq:even-spectral} with the lower bound $\norm K\ge\frac12$ gives
$\bd\le32\,\ell\,N^{1/s}\le32\,\ell\,e^{k/3}$ by
\eqref{eq:even-moment}.  Comparing with \cref{lem:even-density},
\[
        |\cH|\le\frac{640}9\,\sqrt r\,
        \Bigl(\frac{e^{5/3}}8\Bigr)^{r}
        \ell\Bigl(\frac n\ell\Bigr)^{r} ,
\]
and $\sqrt r\,(e^{5/3}/8)^{r}$ decreases in $r\ge2$, since
$\tfrac1{2r}\le\tfrac14<\log(8/e^{5/3})$.  Its value at $r=2$ is
$\sqrt2\,(e^{5/3}/8)^2\le\tfrac58$, so
\[
        |\cH|\le\frac{400}9\,\ell\Bigl(\frac n\ell\Bigr)^{k/2}
        \le45\,\frac{n^{k/2}}{\ell^{k/2-1}}. \qedhere
\]
\end{proof}

\section{Odd uniformity}\label{sec:odd}

Let $k=2r+1\ge3$ be odd.  We keep the central range \eqref{eq:central}
in force, with three exceptions: \cref{def:spread}, the packing bound of \cref{lem:odd-remainder}, the pigeonhole \eqref{eq:dominant}, and \cref{prop:odd-trace} use neither \eqref{eq:central} nor the parity of $k$, and are reused for all levels and both parities in
\cref{sec:completion}.

\begin{theorem}\label{thm:odd-main}
Assume \eqref{eq:central}.  There are absolute constants $A$ and $C$
such that, if $\cH$ has no even cover of size at most
$A\,\ell\log(en/\ell)$, then
\[
        |\cH|\le C\,\frac{n^{k/2}}{\ell^{k/2-1}}.
\]
\end{theorem}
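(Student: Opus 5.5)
We follow the outline given in the overview for odd $k=2r+1$: first reduce to pairs of hyperedges that share a common center, then run the lifted trace argument of \cref{sec:even} on a Kikuchi graph whose moves are these pairs.

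\textbf{Step 1: the packing decomposition.} Call a pair $F,F'\in\cH$ with $F\cap F'=\{c\}$ a move through the center $c$. Then $F\sd F'$ has size $2k-2=4r$, and we let $S\sim S\sd(F\sd F')$ whenever $|S\cap (F\setminus c)|+|S\cap(F'\setminus c)|=2r$, split evenly as $r$ and $r$. Greedily build a maximal packing of stars: families of hyperedges sharing a center $c$ and otherwise pairwise disjoint. Either a constant fraction of $\cH$ lies in the packing, or the leftover hyperedges are ``spread''. In the spread case each leftover edge meets the packing in a controlled way, and a pigeonhole over centres produces a dominant centre class with many partners. This is the pigeonhole promised as \eqref{eq:dominant}. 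The effect is that a constant fraction of $|\cH|$ is covered by moves, losing only an absolute constant.

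\textbf{Step 2: the weighted Kikuchi graph.} A move $e=(S,F,F')$ touches four row--hyperedge incidences, namely $F$ and $F'$, each at both endpoints. Give $e$ the weight $w_e=1/\delta_e$, where $\delta_e$ is one plus the total excess contention at these four incidences. We then lower-bound the weighted average degree $\bd$, using \cref{lem:ratio} with $q=2r$ (prescribing $2r$ vertices inside the row and $2r$ outside) together with Cauchy--Schwarz in the form $\sum w_e\ge(\#\text{moves})^2/\sum\delta_e$. The target is
\[
\bd\gtrsim c^k\Bigl(\frac\ell n\Bigr)^{k-1}\frac{|\cH|^2}{n}.
\]
This is the odd-$k$ analogue of \cref{lem:even-density}. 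Exactly at the threshold $|\cH|\asymp n^{k/2}/\ell^{k/2-1}$ it gives $\bd\asymp\ell$, with base constants that can be tracked.

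\textbf{Step 3: the lifted trace argument.} Define $K$ with $\Gamma=\diag(d_w(S))+\bd I$, so that $\norm K\ge\frac12$ as before. Take the memory $M$ to be the set of hyperedges (not moves) used an odd number of times. The argument of \cref{lem:even-trace} goes through verbatim with $s\asymp\ell\log(en/\ell)/k$: an odd-used hyperedge set would be an even cover of size at most $4s$. This is the role of \cref{prop:odd-trace}. Next apply the weighted form of \cref{lem:orientation}, which reduces everything to showing that the incoming weight at each lifted state is at most $O(\ell)$. Granting that, $\bd\le O(\ell)N^{1/s}$, and comparing with Step 2 gives $|\cH|\le C\,n^{k/2}/\ell^{k/2-1}$, the factor $e^{O(k)}$ again being absorbed by the $(8/e)^r$-type gain.

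\textbf{Main obstacle: the orientation.} The difficulty is that supports are no longer nested, since a move may add $F$ to the memory while removing $F'$. We orient by a witness rule. When the supports agree, use the distinct-representative system of \cref{lem:even-window} applied to both labels. When they differ, orient toward the larger support, breaking ties lexicographically by which label enters the memory. The witness then pins down a row--hyperedge incidence $(S,F)$, but many partners $F'$ can share that incidence. This is exactly what the weights $1/\delta_e$ are designed to handle: the total weight of the moves through one incidence is at most $1$, up to a constant, because $\delta_e$ counts that contention. Summing over at most $O(\ell)$ witnesses per state then gives incoming weight $O(\ell)$. The step I would check most carefully is that every in-edge is charged to one of its own four incidences at the head, including in the non-nested case; failing that, I would use the packing structure to force $F$ and $F'$ to leave their common center outside $S$.
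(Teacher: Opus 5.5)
\textbf{Gap: single-vertex centers cannot convert $|\cH|$ into Kikuchi degree.} Your trace comparison, the port weights $w_e=1/\delta_e$ with capacity one, and the weighted orientation lemma all match the paper. The combinatorial front end, however, has a genuine gap.

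Here is a concrete failure. Take $k=5$, a perfect matching $P$ on one half of $[n]$, a disjoint union $\mathcal T$ of copies of $K_4^{(3)}$ on the other half, and $\cH=\{p\cup T:p\in P,\ T\in\mathcal T\}$.
\begin{itemize}
\item $|\cH|=n^2/8$. For $\ell=\sqrt n$, which lies in the central range for large $n$, this exceeds $n^{5/2}/\ell^{3/2}=n^{7/4}$ by a factor of order $n^{1/4}$.
\item Yet $|p\cap p'|\in\{0,2\}$ and $|T\cap T'|\in\{0,2,3\}$, so no two hyperedges meet in exactly one vertex.
\item Your move set is therefore empty and $\bd=0$. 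Both the claim that a constant fraction of $\cH$ is covered by moves and the target $\bd\gtrsim c^k(\ell/n)^{k-1}|\cH|^2/n$ fail.
\end{itemize}
Steps 1--2 never use the girth hypothesis, and the trace bound says nothing when $\bd=0$. This $\cH$ does contain $4$-element even covers, but your argument has no way to detect them.

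The missing idea is the paper's high-codegree decomposition. Spread bundles have centers $U_\cB$ of any size $k-q$, $1\le q\le k-1$, each with exactly $h_q=\max\{2,\lceil(n/\ell)^{q-k/2}\rceil\}$ members, subject to \eqref{eq:spread}.
\begin{itemize}
\item At a vertex $v$ of free degree at least $h_{k-1}$, choose a center $U\ni v$ of \emph{smallest} residual size among those with free codegree at least $h_{k-|U|}$. This yields a new spread bundle.
\item That is why the remainder satisfies $m_0\le\frac2k\,n^{k/2}/\ell^{k/2-1}$ (\cref{lem:odd-remainder}).
\item The pigeonhole \eqref{eq:dominant} is then taken over residual sizes $q$, not over centers.
\item The spread condition and the cap $h_q$ are also exactly what bound the excess contention in \cref{lem:odd-contention}. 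Your Cauchy--Schwarz step needs that bound, but you never estimate it.
\end{itemize}

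\textbf{Gap: the unequal-support orientation.} The obstacle you flagged is real, and your fallback does not resolve it. If windows contain whole hyperedges, a step entering both labels $F,F'$ can have $W(x)\setminus W(y)=\{c\}$, where the unmoved center $c$ is not in $S$. Keeping the center outside $S$ is precisely this bad case. The witness then lies outside the row, and the number of such witnesses is bounded only by $|W(x)\setminus S|=O(ks)$. That gives in-weight $O(\ell\log(en/\ell))$, the same logarithmic loss as the trivial memory-size orientation.

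The paper avoids this in two ways.
\begin{itemize}
\item It uses residual windows $W(S,M)=S\cup\bigcup_{F\in M}R_F$. These are well defined because every packed hyperedge lies in a unique bundle; your moves through an arbitrary shared vertex give no canonical residual.
\item It adjoins one formal tag $\tau_\cB$ per bundle. A dependency among the vectors $\1_{R_F}+\1_{\{\tau_{\cB(F)}\}}$ meets every bundle evenly, so the centers cancel and the dependency is a forbidden even cover. This gives Hall's condition for the augmented sets.
\end{itemize}
The shared tag then forces one label of an equal-support edge to have a genuine representative among the moved vertices. For unequal supports, the witness is forced into $S$ and determines a single port, so port capacity bounds the in-weight by $2\ell$.
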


\subsection{Spread bundles and a dominant level}

\begin{definition}[Spread bundle]\label{def:spread}
For $1\le q\le k-1$, put
$h_q:=\max\{2,\lceil(n/\ell)^{\,q-k/2}\rceil\}$.
A \emph{spread bundle of residual size} $q$ is a family $\cB\subseteq\cH$
of $h_q$ hyperedges through a common $(k-q)$-set $U_\cB$, so that every
$F\in\cB$ decomposes as $F=U_\cB\mathbin{\dot\cup}R_F$ with
\emph{residual} $R_F$ of size $q$, such that
\begin{equation}\label{eq:spread}
        \#\{J\in\cB: T\subseteq R_J\}<h_{q-t}
\end{equation}
for every $F\in\cB$ and every $T\subseteq R_F$ with
$1\le t:=|T|<q$.
\end{definition}

Choose a maximal hyperedge-disjoint collection of spread bundles,
simultaneously over all residual sizes.  Let $m_q$ be the number of
hyperedges in bundles of residual size $q$, put $m_+:=\sum_qm_q$, and
let $m_0:=|\cH|-m_+$ be the number of remaining hyperedges.

\begin{lemma}\label{lem:odd-remainder}
For all $1\le\ell\le n$, the packing swallows all but a sparse
remainder:
\[
        m_0\le\frac2k\,\frac{n^{k/2}}{\ell^{k/2-1}}.
\]
\end{lemma}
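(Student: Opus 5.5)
The plan is to show that the remainder $\cH_0:=\cH\setminus\bigcup\cB$ (the hyperedges not in any packed bundle) has small maximum vertex degree, and then count. Maximality of the packing means $\cH_0$ contains no spread bundle of any residual size $1\le q\le k-1$. Indeed, any such bundle inside $\cH_0$ is hyperedge-disjoint from all packed bundles, so it could be added to the packing. Note that \eqref{eq:spread} only concerns hyperedges of $\cB$ itself, so being a spread bundle is intrinsic to the family $\cB$. This is why the argument uses neither \eqref{eq:central} nor the parity of $k$.

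\emph{Key claim:} in a hypergraph $\cH_0$ without spread bundles, every $(k-q)$-set $U$ lies in fewer than $h_q$ hyperedges of $\cH_0$, for every $1\le q\le k-1$. Suppose not, and pick a violating pair $(U,q)$ with $q$ minimal. Let $\cB$ be any $h_q$ hyperedges of $\cH_0$ through $U$. Since the sets $F\setminus U$ are distinct, $\cB$ is a family of $h_q$ hyperedges through the common $(k-q)$-set $U$. If $q=1$, condition \eqref{eq:spread} is vacuous, so $\cB$ is a spread bundle, which is a contradiction. If $q\ge2$ and $\cB$ is not spread, there exist $F\in\cB$ and $T\subseteq R_F$ with $1\le t=|T|<q$ such that at least $h_{q-t}$ members of $\cB$ contain $U\cup T$. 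Then $U\cup T$ is a $(k-(q-t))$-set with at least $h_{q-t}$ hyperedges of $\cH_0$ through it, which contradicts the minimality of $q$. Either way we reach a contradiction, so $\cB$ would have to be a spread bundle, which is excluded.

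Apply the claim with $q=k-1$, so that $U=\{v\}$ is a single vertex. Every vertex then has $\cH_0$-degree at most
\[
h_{k-1}-1=\max\bigl\{1,\lceil (n/\ell)^{k/2-1}\rceil-1\bigr\}\le (n/\ell)^{k/2-1}.
\]
This uses $n\ge\ell$ and $k/2-1\ge 1/2>0$, so that $(n/\ell)^{k/2-1}\ge1$, together with $\lceil x\rceil-1<x$. Double counting vertex–hyperedge incidences then gives
\[
m_0=|\cH_0|\le\frac{n}{k}\Bigl(\frac{n}{\ell}\Bigr)^{k/2-1}=\frac1k\,\frac{n^{k/2}}{\ell^{k/2-1}}.
\]
This is even stronger than the stated bound by a factor of $2$.

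The only delicate step is the minimal-counterexample descent in the key claim. The points to watch there are that the residual sizes $q-t$ stay within $[1,k-1]$, and that the spread condition is checked only within the chosen $\cB$. Both follow directly from \cref{def:spread}.
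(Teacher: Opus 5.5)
Your proof is correct and follows the paper's argument: among the free hyperedges, a violating pair with minimal residual size yields a spread bundle, contradicting maximality of the packing; one then bounds vertex degrees and double counts incidences. Your deviations are harmless. You minimize over all pairs $(U,q)$ rather than over $U\ni v$. You also use integrality to get $h_{k-1}-1\le(n/\ell)^{k/2-1}$ instead of $h_{k-1}\le 2(n/\ell)^{k/2-1}$, which validly improves the constant by a factor of $2$.
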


\begin{proof}
Call a hyperedge \emph{free} if it lies in no chosen bundle, and for
$U\subseteq[n]$ with $1\le|U|\le k-1$ let $d_0(U)$ be the number of free
hyperedges containing $U$.  Suppose some vertex $v$ had
$d_0(\{v\})\ge h_{k-1}$.  Among all $U\ni v$ with
$d_0(U)\ge h_{k-|U|}$, choose one whose residual size $q:=k-|U|$ is
smallest, and let $\cB$ consist of any $h_q$ free hyperedges containing
$U$.  Then $\cB$ is a spread bundle with center $U$: a violation of
\eqref{eq:spread} at some $T$ of size $t$ would give
$d_0(U\cup T)\ge h_{q-t}$ with residual size $q-t<q$, contradicting
minimality.  Adding $\cB$ contradicts the maximality of the packing.

Hence $d_0(\{v\})<h_{k-1}\le2(n/\ell)^{k/2-1}$ for every vertex $v$,
using $n\ge\ell$.  Summing over vertices,
$km_0<2n(n/\ell)^{k/2-1}=2n^{k/2}/\ell^{k/2-1}$.
\end{proof}

\begin{lemma}\label{lem:odd-pairs}
In the central range, every spread bundle of residual size $q$ contains
at least $h_q^2/4$ unordered pairs $\{F,J\}$ with
$R_F\cap R_J=\varnothing$.
\end{lemma}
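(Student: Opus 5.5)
We prove \cref{lem:odd-pairs} by counting the ordered pairs $(F,J)$ of distinct members of a bundle $\cB$ whose residuals intersect, and showing this count is at most about half of all ordered pairs. Since $|\cB|=h_q$, there are $h_q(h_q-1)$ ordered pairs of distinct members. So it suffices to show that each $F\in\cB$ meets at most $h_q/2-1$ other residuals; more loosely, at most $\tfrac12 h_q-1$ of the others. Then each $F$ is disjoint from at least $h_q/2$ others, giving at least $h_q\cdot(h_q/2)/2=h_q^2/4$ unordered disjoint pairs.

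Fix $F\in\cB$. Every $J\ne F$ with $R_J\cap R_F\ne\varnothing$ contains some singleton $\{v\}\subseteq R_F$. A union bound over the $q$ vertices $v$ of $R_F$ and the spread condition \eqref{eq:spread} with $t=1$ (valid when $q\ge2$) bound the number of such $J$ by
\[
q\,(h_{q-1}-1).
\]
When $q=1$ two distinct residuals are distinct singletons and hence disjoint, so every pair works and $\binom{h_1}{2}\ge h_1^2/4$ since $h_1\ge2$.

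For $q\ge2$ we must compare $q\,h_{q-1}$ with $h_q/2$. Here $h_{q-1}\approx(n/\ell)^{q-1-k/2}$ and $h_q\approx(n/\ell)^{q-k/2}$, so their ratio is about $\ell/n\le 1/(100k)$ by \eqref{eq:central}, and $q h_{q-1}\lesssim h_q/100$. The main obstacle is the rounding: the $\max\{2,\lceil\cdot\rceil\}$ in $h$ breaks this comparison. We split into cases.

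\begin{itemize}
\item If $h_{q-1}=2$, the spread condition says each vertex of $R_F$ lies in fewer than $2$ residuals, i.e.\ only in $R_F$ itself. Then every other residual is disjoint from $R_F$, and all $\binom{h_q}{2}\ge h_q^2/4$ pairs qualify.
\item Otherwise $h_{q-1}=\lceil x/(n/\ell)\rceil$ with $x:=(n/\ell)^{q-k/2}$, and $h_{q-1}\ge3$. Then $x\ge 2(n/\ell)$, so
\[
h_{q-1}\le x\ell/n+1\le 2x\ell/n,
\]
and therefore
\[
q(h_{q-1}-1)\le 2qx\ell/n\le 2x/100\le h_q/50,
\]
using $q\ell/n\le k\ell/n\le 1/100$ and $h_q\ge x$.
\end{itemize}

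In the second case each $F$ is disjoint from at least $h_q-1-h_q/50\ge h_q/2$ others, since $h_q\ge 2$. Summing over $F$ and halving gives at least $h_q^2/4$ unordered pairs, which completes the proof.
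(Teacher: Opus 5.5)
Your proof is correct and is essentially the paper's argument. You take a union bound over the $q$ vertices of $R_F$ using the $t=1$ spread condition, then compare $q\,h_{q-1}\lesssim (q\ell/n)\,h_q\le h_q/50$ using $n\ge100k\ell$; your split on $h_{q-1}=2$ versus $h_{q-1}\ge3$ plays the role of the paper's split $q\le r+1$ versus $q\ge r+2$.

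The only slip is your final justification. The bound $h_q\ge2$ alone does not give $h_q-1-h_q/50\ge h_q/2$: at $h_q=2$ the left side is $0.96$. In your second case, however, $h_q\ge x>2n/\ell\ge200k$, so the inequality holds comfortably. Alternatively, the integrality of the number of meeting residuals rescues it.
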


\begin{proof}
Let $\cB$ be the bundle.  If $q\le r+1$, then $h_{q-1}=2$ when $q\ge2$,
so no vertex lies in two residuals of $\cB$: a common vertex $v$ would
put two members through $T=\{v\}$, violating \eqref{eq:spread}; for
$q=1$ the residuals are distinct singletons because $\cH$ is simple.
The residuals are thus pairwise disjoint and all
$\binom{h_q}2\ge h_q^2/4$ pairs qualify (for $q\le r$ one has
$(n/\ell)^{q-k/2}\le1$, so $h_q=2$ and $\cB$ is a single pair).

If $q\ge r+2$, fix $F\in\cB$.  For each $v\in R_F$, \eqref{eq:spread}
bounds the number of members other than $F$ whose residual contains $v$
by $h_{q-1}-2$.  Since $q-1>k/2$,
$h_{q-1}\le(n/\ell)^{q-1-k/2}+1\le2(n/\ell)^{q-1-k/2}$, so the number of
members meeting $R_F$ is at most
\[
        2q\,(n/\ell)^{\,q-1-k/2}
        \le\frac{2k\ell}{n}\,(n/\ell)^{\,q-k/2}
        \le\frac{h_q}{50}
        \le\frac{h_q}4 .
\]
Hence $F$ has at least $h_q-1-h_q/4\ge h_q/2$ residual-disjoint
partners, using $h_q\ge(n/\ell)^{3/2}\ge4$; the number of unordered
disjoint pairs is at least $h_q\cdot(h_q/2)/2=h_q^2/4$.
\end{proof}

If $m_+=0$, then $|\cH|=m_0\le n^{k/2}/\ell^{k/2-1}$ and
\cref{thm:odd-main} holds.  Otherwise fix once and for all a residual
size $q$ with
\begin{equation}\label{eq:dominant}
        m_q\ge\frac{m_+}{k-1};
\end{equation}
only the bundles of residual size $q$ are used from now on.

\subsection{Balanced transitions and exact excess weights}

Put $a:=\lfloor q/2\rfloor$, $b:=\lceil q/2\rceil$, and
$B_q:=\binom qa$.  For every residual-disjoint pair $F,J$ in one bundle of residual size
$q$, retain every \emph{balanced transition}
\begin{equation}\label{eq:balanced}
 S\longleftrightarrow S':=S\sd R_F\sd R_J,
 \qquad
 \{|S\cap R_F|,\,|S\cap R_J|\}
 =\{a,b\},
\end{equation}
recorded together with its label pair $\{F,J\}$; both rows lie in
$\Om_\ell$, because $a+b=q$.  Choosing
$S\cap R_F$, $S\cap R_J$, and the remaining $\ell-q$ vertices of $S$
freely, and noting that each instance arises from at most two such
choices, a fixed pair creates at least
$\tfrac12B_q^2\binom{n-2q}{\ell-q}$ instances.  By
\cref{lem:odd-pairs}, the total number $\mu_q$ of instances satisfies
\begin{equation}\label{eq:odd-mass}
        \mu_q\ge\frac18\,m_qh_qB_q^2\binom{n-2q}{\ell-q}.
\end{equation}

A \emph{port} is a row--hyperedge pair $z=(S,F)$, the point of contact through which a move at $S$ may use $F$.  If an instance
$e$ has endpoint rows $S,S'$ and labels $F,J$, write
$P(e):=\{(S,F),(S,J),(S',F),(S',J)\}$ for its four ports, and let $c(z)$ be the number of instances whose
port set contains $z$.  Define
\begin{equation}\label{eq:odd-weights}
        \delta_e:=1+\sum_{z\in P(e)}\bigl(c(z)-1\bigr),
        \qquad w_e:=\delta_e^{-1}.
\end{equation}
This exact form yields both of the properties we need by identities.
Since $\delta_e\ge c(z)$ for each $z\in P(e)$, every port has capacity
one:
\begin{equation}\label{eq:odd-port-capacity}
        \sum_{e:\,z\in P(e)}w_e\le1.
\end{equation}
Since each port $z$ lies in exactly $c(z)$ instances,
$\sum_e\delta_e=\mu_q+\sum_zc(z)(c(z)-1)$, so Cauchy--Schwarz, in the
form $\sum_ew_e\ge \mu_q^2/\sum_e\delta_e$, gives
\begin{equation}\label{eq:retained-mass}
        \sum_ew_e
        \ \ge\ \frac{\mu_q^2}{\mu_q+\sum_zc(z)(c(z)-1)}.
\end{equation}

The next lemma is the only counting estimate the odd case requires.
Its exponent $b-k/2$ is negative because $k$ is odd: $q\le k-1$ forces
$b=\lceil q/2\rceil\le(k-1)/2$, so $b-k/2\le-\tfrac12$; equivalently,
the balanced half $a=\lfloor q/2\rfloor$ always exceeds $q-k/2$ by at
least one half.

\begin{lemma}\label{lem:odd-contention}
If $q\le r$, then $c(z)\le1$ for every port $z$.  If $q\ge r+1$, then
the total excess contention obeys
\begin{equation}\label{eq:odd-contention}
        \sum_zc(z)\bigl(c(z)-1\bigr)\le\Xi_q\,\mu_q,
        \qquad
        \Xi_q:=8\,B_q\,(2.031)^{a}\,\Bigl(\frac n\ell\Bigr)^{b-k/2}.
\end{equation}
\end{lemma}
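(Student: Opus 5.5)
The plan is to express both contention quantities through a local description of the instances at a fixed port, and then to count colliding pairs of instances by the spread property \eqref{eq:spread}. Since bundles are hyperedge-disjoint and only residual size $q$ is retained, a hyperedge $F$ lies in at most one bundle $\cB$. An instance containing the port $z=(S,F)$ has $S$ as an endpoint and $F$ as a label. It is therefore determined by its partner $J\in\cB$, because the other endpoint is forced to be $S\sd R_F\sd R_J$. Hence
\[
c(S,F)=\#\bigl\{J\in\cB:\ R_J\cap R_F=\varnothing,\ |S\cap R_J|=q-|S\cap R_F|\bigr\},
\]
and this is nonzero only when $|S\cap R_F|\in\{a,b\}$. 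If $q\le r$, the proof of \cref{lem:odd-pairs} shows $h_q=2$, so $\cB$ is a single pair and $c(z)\le1$. That settles the first claim.

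For $q\ge r+1$, I would read $\sum_zc(z)(c(z)-1)$ as the number of ordered pairs of distinct instances sharing a port. These are tuples $(S,F,J,J')$ with $J\ne J'$ both admissible partners of $F$ at $S$. I would organize the count by charging each tuple to the instance $(S,F,J)$ and bounding, for each instance, the number of extra partners $J'$. The sum over instances is then compared with $\mu_q$ via \eqref{eq:odd-mass}. To do this I would average over rows: fix $\cB,F,J$ and $\alpha:=|S\cap R_F|\in\{a,b\}$, and classify $J'$ by $t:=|R_J\cap R_{J'}|$, with $0\le t<q$. I would also record $j:=|S\cap R_J\cap R_{J'}|\le t$.

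By \eqref{eq:spread} applied to $T=R_J\cap R_{J'}$, the number of $J'$ with a given overlap set is less than $h_{q-t}\le2(n/\ell)^{q-t-k/2}$ (or at most $2$). There are at most $\binom qt$ such overlap sets, and at most $h_q$ choices when $t=0$. The constraint $|S\cap R_{J'}|=q-\alpha$ then requires $S$ to contain exactly $q-\alpha-j$ of the $q-t$ new vertices of $R_{J'}$. Among rows compatible with a given instance, the fraction doing so is at most $\binom{q-t}{q-\alpha-j}(\ell/n)^{q-\alpha-j}$, up to a constant, by a ratio estimate in the spirit of \cref{lem:ratio}. The product therefore has exponent
\[
(q-t-k/2)-(q-\alpha-j)=\alpha-k/2-(t-j)\le b-k/2 .
\]
This is exactly where odd $k$ enters, since $b-k/2\le-\tfrac12$. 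The case $t=0$ gives the main term $h_q(\ell/n)^{q-\alpha}\lesssim(n/\ell)^{b-k/2}$, using $h_q\le2(n/\ell)^{q-k/2}$.

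Summing over $t$, $j$ and $\alpha$ then yields a factor $B_q$ from choosing the pattern of $S$ on the residuals, and a geometric-type constant from the sums $\sum_t\binom qt\binom tj\binom{q-t}{\cdot}$ weighted by powers of $\ell/n\le1/(100k)$. Together with the factor $8$ from \eqref{eq:odd-mass}, this should produce $\Xi_q\mu_q$.

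I expect the main obstacle to be the bookkeeping. The per-instance count must be made exact rather than averaged, because a row-by-row bound fails at atypical rows. The binomial sums over $t,j$ must also collapse to $(2.031)^a$ without any $k$-dependence. I would first try an exact double counting: count tuples $(S,F,J,J')$ directly by choosing the traces of $S$ on $R_F\cup R_J\cup R_{J'}$ and the rest of $S$ freely. I would then divide by the lower bound $\tfrac12B_q^2\binom{n-2q}{\ell-q}$ per pair that underlies \eqref{eq:odd-mass}, absorbing the resulting slice ratios $\binom{n-3q+t}{\ell-\cdots}/\binom{n-2q}{\ell-q}$ with the central range \eqref{eq:central}.
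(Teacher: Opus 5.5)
Your outline follows the paper's proof: a port is determined by the partner $J$, excess is charged to instances, you average over the free $\ell-q$ vertices of $S$ for a fixed pair, classify $J'$ by its trace $T=R_J\cap R_{J'}$, and the exponent comes out independent of $t$. However, the per-trace count has an off-by-one that the estimate cannot survive.

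Condition \eqref{eq:spread} counts \emph{all} members of the bundle whose residual contains $T$, and $J$ is one of them. So the number of extra partners $J'\ne J$ with $T\subseteq R_{J'}$ is at most $h_{q-t}-2$. Hence, for $t\ge1$, any such $J'$ forces $h_{q-t}>2$, i.e.\ $(n/\ell)^{q-t-k/2}>1$. This both justifies $h_{q-t}\le 2(n/\ell)^{q-t-k/2}$ and shows $t\le q-\frac{k+1}2<a$; all larger traces contribute nothing.

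Your bound ``less than $h_{q-t}$ \dots\ (or at most $2$)'' permits one $J'$ per trace when $q-t<k/2$. In that range $2(n/\ell)^{q-t-k/2}<1$, so your exponent computation is not justified there. Moreover, those terms are genuinely of order one. Take $\alpha=b$ and $T=S\cap R_J$, which has size $a$ with $q-a=b<k/2$: no new vertex of $R_{J'}$ needs to lie in $S$, so the probability is close to $1$. Since $(n/\ell)^{b-k/2}$ can be arbitrarily small, such contributions cannot be absorbed into $\Xi_q$.

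Once that is fixed, the bookkeeping you worry about collapses if you relax ``exactly $q-\alpha-j$'' to ``at least $a-t$'' of the $q-t$ new vertices. This is legitimate since $q-\alpha\ge a$ and $j\le t$. With $y:=n(\ell-q)/(\ell(n-3q))\le1.031$, the probability is at most $\binom{q-t}{a-t}(y\ell/n)^{a-t}$; this is $y^{a-t}$, not ``a constant'', and it is where $(2.031)^a$ comes from. Summing over the $\binom qt$ traces with the identity $\binom qt\binom{q-t}{a-t}=B_q\binom at$ gives $2B_q(n/\ell)^{b-k/2}(1+y)^a$ per port, uniformly in the pattern of $S$.

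So the $B_q$ comes from this identity, not from choosing the pattern of $S$. The $8$ in $\Xi_q$ is the factor $2$ from $h_{q-t}\le2(n/\ell)^{q-t-k/2}$ times the four ports; \eqref{eq:odd-mass} plays no role.

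Finally, average exactly over the instances of the fixed pair rather than dividing by the lower bound $\tfrac12B_q^2\binom{n-2q}{\ell-q}$. Every pattern of $S$ on $R_F\cup R_J$ has the same $\binom{n-2q}{\ell-q}$ completions, so the exact average is available. The lower bound is off by a factor $2$ when $q$ is odd, and would lose that factor in the stated constant.
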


\begin{proof}
If $q\le r$, every bundle is a single residual-disjoint pair $\{F,J\}$,
and a port $(S,F)$ determines the partner hyperedge $J$ and thereby the second
row $S\sd R_F\sd R_J$: at most one instance passes through it.

Let $q\ge r+1$.  Since each port $z$ lies in exactly $c(z)$ instances,
$\sum_zc(z)(c(z)-1)=\sum_e\sum_{z\in P(e)}(c(z)-1)$, so it suffices to
show that, for every pair $F,J$, the average of
$\sum_{z\in P(e)}(c(z)-1)$ over the instances $e$ of that pair is at
most $\Xi_q$.  By symmetry we bound the contribution of the port
$(S,F)$ and multiply by four.

Fix an instance $e$ with rows $S,S'$ and labels $F,J$. Every packed hyperedge lies in a unique selected bundle; for such an $F$,
write $\cB(F)$ for that bundle. An instance
$e'\ne e$ with $(S,F)\in P(e')$ has a label pair $\{F,J'\}$ with
$J'\in\cB(F)$: its second row is $S\sd R_F\sd R_{J'}$, so $e'$ is
determined by $J'$, and $J'=J$ returns $e'=e$.  Thus
$c((S,F))-1$ is at most the number of $J'\in\cB(F)\setminus\{F,J\}$
with $R_{J'}\cap R_F=\varnothing$ and
$|S\cap R_{J'}|=q-|S\cap R_F|$.

Let $t:=|R_{J'}\cap R_J|$.  If $t\ge1$, then $J$ and $J'$ are two bundle
members whose residuals contain $T:=R_{J'}\cap R_J$, so
\eqref{eq:spread} forces $h_{q-t}>2$, hence
$(n/\ell)^{q-t-k/2}>1$ and $t\le q-\tfrac{k+1}2$; in particular $t<a$,
since $a-\bigl(q-\tfrac{k+1}2\bigr)=\tfrac{k+1}2-b\ge1$.  For a fixed
trace $T\subseteq R_J$ of size $t\ge1$, the spread condition bounds
the number of admissible $J'$ by
$h_{q-t}\le2(n/\ell)^{q-t-k/2}$; for $t=0$ the trivial bound
$h_q\le2(n/\ell)^{q-k/2}$ is of the same form.

Now average over the instances of $(F,J)$ with the patterns
$S\cap R_F$ and $S\cap R_J$ fixed; the remaining $\ell-q$ vertices of
$S$ form a uniformly random $(\ell-q)$-subset of the $n-2q$ vertices
off $R_F\cup R_J$.  For fixed $J'$ with trace size $t$, the event
$|S\cap R_{J'}|=q-|S\cap R_F|\ge a$ requires at least $a-t$ of the
$q-t$ vertices of $R_{J'}\setminus R_J$ to be chosen, so its
probability is at most
\[
        \binom{q-t}{a-t}
        \left(\frac{\ell-q}{n-3q}\right)^{a-t},
\]
using $(\ell-q)_j/(n-2q)_j\le((\ell-q)/(n-3q))^j$ for $j\le q$.  The
quantity $y:=n(\ell-q)/(\ell(n-3q))$ satisfies $y\le n/(n-3q)\le1.031$,
because $3q\le3k\le3n/100$ by \eqref{eq:central}.  Summing over
the $\binom qt$ traces and the admissible $t$, and using
$\binom qt\binom{q-t}{a-t}=B_q\binom at$, the expected excess
contention at the port $(S,F)$ is at most
\[
        \sum_{t\ge0}\binom qt\cdot2\Bigl(\frac n\ell\Bigr)^{q-t-k/2}
        \binom{q-t}{a-t}\Bigl(\frac{y\ell}n\Bigr)^{a-t}
        =2B_q\Bigl(\frac n\ell\Bigr)^{b-k/2}\sum_{t\ge0}\binom at y^{\,a-t}
        =2B_q\Bigl(\frac n\ell\Bigr)^{b-k/2}(1+y)^a,
\]
where we used $(q-t-k/2)-(a-t)=b-k/2$, which is independent of $t$.
Finally
$(1+y)^a\le(2.031)^a$, and multiplying by the four ports gives
\eqref{eq:odd-contention}.
\end{proof}

Let $Y$ be the weighted labeled multigraph consisting of all instances
at the dominant level $q$ with the weights \eqref{eq:odd-weights}, and
let $\bd:=N^{-1}\sum_Sd(S)=2N^{-1}\sum_ew_e$ be its weighted average
degree, where $N:=\binom n\ell$ as before; note $\bd>0$, since
$m_q>0$ and $n\ge\ell+q$ make $\mu_q>0$ in \eqref{eq:odd-mass}.

\begin{lemma}\label{lem:odd-gain}
With the weights \eqref{eq:odd-weights}, the graph $Y$ converts
hyperedges into degree according to
\begin{equation}\label{eq:odd-gain}
        \bd\ \ge\ \frac{(1.0324)^k}{74\,k^3}\,
        m_+\Bigl(\frac\ell n\Bigr)^{k/2}.
\end{equation}
\end{lemma}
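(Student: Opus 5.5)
The plan is to combine the three quantitative inputs already in place: the retained-mass inequality \eqref{eq:retained-mass}, the contention bound of \cref{lem:odd-contention}, and the instance count \eqref{eq:odd-mass}, together with \cref{lem:ratio} and the dominant-level pigeonhole \eqref{eq:dominant}. By definition $\bd=2N^{-1}\sum_ew_e$. When $q\le r$, all contentions are at most one, so $w_e=1$ and $\sum_ew_e=\mu_q$. When $q\ge r+1$, \eqref{eq:retained-mass} and \eqref{eq:odd-contention} give
\[
\sum_ew_e\ \ge\ \frac{\mu_q}{1+\Xi_q}.
\]
In either case $\bd\ge 2\mu_q/\bigl(N(1+\Xi_q)\bigr)$, with the convention $\Xi_q=0$ for $q\le r$. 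Substituting \eqref{eq:odd-mass} and \cref{lem:ratio} at this $q$ yields
\[
\bd\ \ge\ \frac{9}{40}\,m_q\,h_q\,B_q^2\Bigl(\frac2e\Bigr)^q\Bigl(\frac\ell n\Bigr)^q\frac1{1+\Xi_q}.
\]

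Next I bring in $h_q$. We have $h_q\ge(n/\ell)^{q-k/2}$, so $h_q(\ell/n)^q\ge(\ell/n)^{k/2}$, which is exactly the target power. It remains to bound $1+\Xi_q$ in two cases.

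\emph{Case $q\le r$.} Here $1+\Xi_q=1$, and $h_q\ge2\ge(n/\ell)^{q-k/2}$ still holds.

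\emph{Case $q\ge r+1$.} Since $b-k/2\le-\tfrac12$ and $n/\ell\ge100k$, we get $\Xi_q\le 8B_q(2.031)^a(100k)^{-1/2}$. This is not small in general, so I would use the crude bound $1+\Xi_q\le 2\max(1,\Xi_q)$. The surviving factor is then at least
\[
\min\Bigl(B_q^2,\ \frac{B_q\sqrt{100k}}{8\,(2.031)^a}\Bigr)\Bigl(\frac2e\Bigr)^q.
\]

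Then I use $m_q\ge m_+/(k-1)$ from \eqref{eq:dominant}, and the proof reduces to a purely numeric inequality in $q,k$. The target is $(1.0324)^k/(74k^3)$ up to the factor $1/(k-1)$ already spent, so one needs roughly $F(q)\gtrsim(1.0324)^k/k^2$ for all $1\le q\le k-1$.

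For the numerics, use $B_q\ge 2^q/\sqrt{2q}$ (central binomial, as in \cref{lem:even-density}).
\begin{itemize}
\item The $B_q^2$ branch gives $(8/e)^q/(2q)$, which is large.
\item The $\Xi$ branch gives about $2^q(2/e)^q\sqrt{k}/(2.031)^{q/2}\cdot\mathrm{poly}(q)^{-1}$. Its base is $4/(e\sqrt{2.031})\approx1.0326$, which explains the constant $1.0324$. Since this base exceeds one, it is increasing in $q$. For $q\ge r+1\approx k/2$ it gives $1.0326^{k/2}$ at the very least, and one must check it reaches $(1.0324)^k$. Indeed $q$ can be near $k-1$, but the worst case is $q=r+1$, so I need to re-examine this.
\item For small $q\le r$, the bound is $\frac{9}{40}B_q^2(2/e)^q h_q$ with $h_q=2$ rather than $(n/\ell)^{q-k/2}$. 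Here the slack $(n/\ell)^{k/2-q}\ge(100k)^{1/2}$ covers any deficit.
\end{itemize}

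The main obstacle is this final exponential bookkeeping. One must show the $(2.031)^a$ loss in $\Xi_q$ is beaten by $B_q(2/e)^q$ uniformly in $q\in[r+1,k-1]$ with a $k^3$ polynomial allowance, to produce exactly $(1.0324)^k$. If the worst case $q=r+1$ does not reach $1.0324^k$, I would additionally exploit the gain $(n/\ell)^{k/2-b}$ in $\Xi_q$ beyond the half-power, which is $(100k)^{(k+1)/2-b}$ when $b<(k-1)/2$, to recover the missing exponential.
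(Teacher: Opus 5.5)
The skeleton matches the paper's, but there is a genuine gap: the second branch, which is the whole content of the lemma, is not proved.

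Your first steps agree with the paper. You derive $\bar d\ge 2\mu_q/(N(1+\Xi_q))$ and insert the instance count and the slice ratio. Your two branches then give exactly the paper's two bounds:
\begin{itemize}
\item $\frac{9}{80}m_qh_qB_q^2(2/e)^q(\ell/n)^q$ when contention is small;
\item $\frac{9}{640}m_qh_qB_q(2/e)^q(2.031)^{-a}(n/\ell)^{k/2-b}(\ell/n)^q$ otherwise.
\end{itemize}
The first branch is fine. The paper handles all $q$ there at once via $(8/e)^qh_q(\ell/n)^q\ge(8/e)^{k/2}(\ell/n)^{k/2}$, using $h_q\ge1$ and $n/\ell\ge8/e$ when $q\le k/2$.

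In the second branch you replace $(n/\ell)^{k/2-b}$ by $(100k)^{1/2}$. As you note yourself, what remains is about $\gamma^q\sqrt{k}$ with $\gamma=4/(e\sqrt{2.031})$. For $q$ near $r+1$ this is of order $\gamma^{k/2}\sqrt{k}$, which falls exponentially short of $(1.0324)^k$. Your remedy is only announced conditionally, so as written the argument fails on the lower part of the range $r+1\le q\le k-1$.

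The missing step is the one the paper uses. Keep the full factor and use $2b\le q+1$, so that $(n/\ell)^{k/2-b}\ge(n/\ell)^{(k-q-1)/2}$. Combine this with $h_q(\ell/n)^q\ge(\ell/n)^{k/2}$, $B_q\ge2^q/(q+1)$ and $(2.031)^{-a}\ge(2.031)^{-q/2}$. The second branch then becomes
\[
\frac{9}{640}\,\frac{m_q}{q+1}\,\gamma^q\Bigl(\frac n\ell\Bigr)^{(k-q-1)/2}\Bigl(\frac\ell n\Bigr)^{k/2}.
\]
Because $n/\ell\ge100k>\gamma^2$, the map $q\mapsto\gamma^q(n/\ell)^{(k-q-1)/2}$ is decreasing. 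So the worst case is $q=k-1$, not $q=r+1$; the latter was an artifact of discarding the gain. At $q=k-1$ the bound yields $\gamma^{k-1}\ge(1.0324)^k/1.0324$. Then $9/(640\cdot1.0324)\ge1/74$, and $m_q\ge m_+/(k-1)$ together with $(q+1)^2\le k^2$ produces the $k^3$.

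Two smaller slips:
\begin{itemize}
\item The gain beyond the half power is $(n/\ell)^{(k-1)/2-b}$, not $(100k)^{(k+1)/2-b}$.
\item $B_q\ge2^q/\sqrt{2q}$ is false for odd $q$ (e.g.\ $B_1=1<\sqrt2$ and $B_3=3<8/\sqrt6$), so use $B_q\ge2^q/(q+1)$ instead.
\end{itemize}
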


\begin{proof}
Write $\Sigma:=\sum_zc(z)(c(z)-1)$.  By \eqref{eq:odd-mass},
\cref{lem:ratio}, and $B_q\ge2^q/(q+1)$,
\begin{equation}\label{eq:odd-mass-converted}
 \frac{\mu_q}N
 \ \ge\ \frac9{80}\,m_q\,h_qB_q^2
 \Bigl(\frac2e\Bigr)^{q}\Bigl(\frac\ell n\Bigr)^{q}
 \ \ge\ \frac9{80}\,\frac{m_q}{(q+1)^2}
 \Bigl(\frac8e\Bigr)^{q}h_q\Bigl(\frac\ell n\Bigr)^{q},
\end{equation}
and in either regime of $h_q$,
\begin{equation}\label{eq:h-unified}
 \Bigl(\frac8e\Bigr)^{q}h_q\Bigl(\frac\ell n\Bigr)^{q}
 \ \ge\ \Bigl(\frac8e\Bigr)^{k/2}\Bigl(\frac\ell n\Bigr)^{k/2}:
\end{equation}
for $q\le k/2$ use $h_q\ge1$ and $n/\ell\ge8/e$, and for $q>k/2$ use
$h_q\ge(n/\ell)^{q-k/2}$.

If $\Sigma\le \mu_q$ (in particular whenever $q\le r$, where
$c(z)\le1$), then \eqref{eq:retained-mass} gives
$\sum_ew_e\ge \mu_q/2$, so
\[
 \bd\ \ge\ \frac{\mu_q}N
 \ \ge\ \frac9{80}\,\frac{m_q}{(q+1)^2}
 \Bigl(\frac8e\Bigr)^{k/2}\Bigl(\frac\ell n\Bigr)^{k/2}.
\]
If $\Sigma>\mu_q$, then $q\ge r+1$ and
\eqref{eq:retained-mass} and \cref{lem:odd-contention} gives $\sum_ew_e\ge \mu_q/(2\Xi_q)$.  Using the
first bound in \eqref{eq:odd-mass-converted}, together with
$h_q\ge(n/\ell)^{q-k/2}$, $B_q\ge2^q/(q+1)$, and
$(2.031)^{-a}\ge(2.031)^{-q/2}$, and cancelling one $B_q$,
\[
 \bd\ \ge\ \frac{\mu_q}{N\,\Xi_q}
 \ \ge\ \frac9{640}\,\frac{m_q}{q+1}\,
 \gamma^{\,q}\Bigl(\frac n\ell\Bigr)^{k/2-b}
 \Bigl(\frac\ell n\Bigr)^{k/2},
 \qquad
 \gamma:=\frac4{e\sqrt{2.031}}\ge1.0324 .
\]
Since $2b\le q+1$ and $n/\ell\ge100k>\gamma^2$, the map
$q\mapsto\gamma^{q}(n/\ell)^{(k-q-1)/2}$ is decreasing, so
\[
 \gamma^{\,q}\Bigl(\frac n\ell\Bigr)^{k/2-b}
 \ \ge\ \gamma^{\,q}\Bigl(\frac n\ell\Bigr)^{(k-q-1)/2}
 \ \ge\ \gamma^{\,k-1}
 \ \ge\ \frac{(1.0324)^k}{1.0324} ,
\]
using $q\le k-1$ at the second step; and $9/(640\cdot1.0324)\ge1/74$.

In both cases $\bd\ge\frac1{74}(q+1)^{-2}(1.0324)^km_q(\ell/n)^{k/2}$;
now \eqref{eq:dominant} and $(q+1)^2\le k^2$ give \eqref{eq:odd-gain}.
\end{proof}

\subsection{The paired trace estimate}\label{sec:paired}

\begin{proposition}\label{prop:odd-trace}
Let $s\ge1$.  If $\cH$ has no even cover of size
at most $\max\{4s,\ 2\ell+4ks+2\}$, then $\bd\le32\,\ell\,N^{1/s}$.
\end{proposition}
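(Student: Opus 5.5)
We follow the even-case argument of \cref{sec:even} step for step, with the weighted multigraph $Y$ in place of $G$. First we normalize: put $\Gamma:=\diag(d(S))+\bd I$, with $d(S)$ the weighted degree in $Y$, and let $K:=\Gamma^{-1/2}A_Y\Gamma^{-1/2}$, where $A_Y(S,S')=\sum_e w_e$ over instances joining $S$ and $S'$. Testing on $\Gamma^{1/2}\1$ gives $\norm K\ge\frac12$ exactly as before.

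Next comes the lift. Take states $(S,M)$ with $M\subseteq\cH$ and $|M|\le2s$. An instance $e$ with labels $\{F,J\}$ moves $(S,M)$ to $(S\sd R_F\sd R_J,\,M\sd\{F,J\})$ with weight $w_e(\Gamma(S)\Gamma(S'))^{-1/2}$. Since $Y$ is a multigraph, we treat a closed walk as a sequence of instances rather than labels. Its lift is then unique, and the analogue of \cref{lem:even-trace} goes through. Along a closed $2s$-step walk the labels used an odd number of times satisfy $\bigsd R=\varnothing$. Each instance toggles $F,J$ from a common bundle, so the centres $U$ cancel in pairs, and the odd labels form an even cover of size at most $4s$, which is forbidden. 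Hence the memory returns empty, and by the same two-sided counting it never exceeds $2s$ labels. This gives $\Tr K^{2s}\le N\norm\Lop^{2s}$.

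Then comes the orientation. By the weighted form of \cref{lem:orientation}, it suffices to orient the lifted edges so that every state has incoming weight at most $2\ell$; this yields $\bd\le 32\ell N^{1/s}$ just as in the proof of \cref{thm:even-main}. Define windows $W(x)=S\cup\bigcup_{F\in M}F$, so that $|W|\le\ell+2ks$. The forbidden cover size $2\ell+4ks+2$ exceeds twice the window bound plus one, so two windows can be glued. Hence \cref{lem:even-window} gives independence, and therefore representative systems, for hyperedges inside the union of two windows, or inside a window of size $2(\ell+ks)$.

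The main obstacle is that supports need no longer be nested: a step may insert $F$ and delete $J$. We propose the following orientation rule. When the supports agree, orient toward the endpoint whose row contains the representative $\rho_W(F)$ of one designated label, say the smaller one. Otherwise, compare $W(x)\cup W(y)$ against each side and orient toward the endpoint whose support contains the smallest vertex of the symmetric difference $W(x)\sd W(y)$, which acts as the witness.

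The accounting must be per port rather than per label. As in \cref{lem:even-indegree}, we show that the witness $v$ lies in $S$ for the head $x=(S,M)$ and identifies a label $F\ni v$. Thus it identifies a port $(S,F)$, which is possibly not unique but is one of at most two candidates. Port capacity \eqref{eq:odd-port-capacity} then bounds the total incoming weight charged to that port by $1$. Summing over the $\le\ell$ choices of $v\in S$, once for each of the two rules, gives incoming weight at most $2\ell$.

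Verifying that the witness in the support-changing case really lies in $S$ and pins down a port is the delicate step. If it fails, we would use the representative system on $W(x)\cup W(y)$ to pick the witness instead, at the cost of a constant factor that the stated bound $32\ell$ may or may not absorb.
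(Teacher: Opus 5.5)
Your normalization, lift, and trace comparison agree with the paper. The orientation step, however, has a genuine gap, located exactly where the odd case departs from the even one: a move with labels $F,J$ toggles only the residuals $R_F\cup R_J$, never the bundle center $U_{\cB}$, where $\cB:=\cB(F)=\cB(J)$.

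You build windows from full hyperedges, $W(x)=S\cup\bigcup_{F\in M}F$, and take representatives of full hyperedges from \cref{lem:even-window}. Nothing prevents $\rho_W(F)$ from lying in $U_{\cB}$. Such a vertex lies in both endpoint rows or in neither, so ``orient toward the row containing $\rho_W(F)$'' is undefined. Since $\rho_W(F)$ and $\rho_W(J)$ may be two distinct center vertices when $|U_{\cB}|\ge2$, no choice of designated label repairs this.

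The same defect breaks the support-changing case. If the step enters both labels, then $W(x)=W(y)\cup R_F\cup R_J\cup U_{\cB}$. Whenever $R_F\cup R_J\subseteq W(y)$ but $U_{\cB}\not\subseteq W(y)$, the set $W(x)\setminus W(y)$ consists only of center vertices, which lie outside $S_x$. Your witness is then not in $S$ and pins down no port at $S$. Charging such in-edges to the ports $(S,F)$ with $F\in M$ only bounds their weight by $|M|\le2s$, which reintroduces the logarithmic loss of the trivial orientation. Your fallback via $W(x)\cup W(y)$ does not address this. Moreover, your ``one of at most two candidate ports'' would already give $3\ell$ rather than $2\ell$, and hence not the constant $32$.

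The missing idea is the paper's \emph{tagged representatives}.
\begin{itemize}
\item Build windows from residuals only, $W(S,M)=S\cup\bigcup_{F\in M}R_F$. Replace each packed $F$ by $v_F=\1_{R_F}+\1_{\{\tau_{\cB(F)}\}}$, with one formal tag coordinate per bundle.
\item Take a minimal dependency of size $p$ meeting $\beta$ bundles. It meets each bundle evenly, so $\beta\le p/2$, while $p-1\le|W|+\beta$. Hence $p\le2|W|+2\le2\ell+4ks+2$.
\item Summing the dependency cancels the residuals and, by the even bundle counts, the centers, so it is a forbidden even cover. This, not gluing two windows, is where the radius $2\ell+4ks+2$ comes from.
\item Hall's theorem then represents the sets $R_F\cup\{\tau_{\cB(F)}\}$. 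Since the two labels of a step share their tag, at least one of them has a genuine representative in its residual; it is chosen by a tie-break, not simply as the smaller label. That representative lies in exactly one endpoint row.
\item With residual windows, every $v\in W(x)\setminus W(y)$ lies in $S_x\setminus S_y\subseteq R_F\cup R_J$. It lies in the residual of an entered label that is the \emph{unique} member of $M_x$ whose residual contains $v$. Indeed, the other toggled label has a disjoint residual, and all remaining members of $M_x$ lie in $M_y$, whose residuals lie in $W(y)$.
\item So each witness determines a single port, and port capacity gives incoming weight at most $2\ell$.
\end{itemize}
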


\begin{proof}
We run the argument of \cref{sec:even} on $Y$ and record what
changes: each step now toggles \emph{two} labels, and the bundle
centers, which the moves never see, are restored by one formal tag per
bundle.

Let $A_Y$ be the weighted adjacency matrix, and put
$\Gamma:=\diag(d(S))+\bd I$ and $K:=\Gamma^{-1/2}A_Y\Gamma^{-1/2}$;
testing on $\Gamma^{1/2}\1$ gives $\norm K\ge\frac12$.  The lift has
states $(S,M)$ with $|M|\le2s$; an edge of $Y$ with labels $F,J$ sends
$(S,M)\leftrightarrow(S\sd R_F\sd R_J,\,M\sd\{F,J\})$ with weight
$w_e(\Gamma(S)\Gamma(S\sd R_F\sd R_J))^{-1/2}$, and $\Lop$ is the
restriction to the components of the empty-memory states.

\emph{Trace comparison.}  As in \cref{lem:even-trace},
$\Tr K^{2s}=\sum_S\bigl\langle(S,\varnothing)\bigr|\Lop^{2s}
\bigl|(S,\varnothing)\bigr\rangle$, both sides counting the same closed
walks with the same weights: a closed $2s$-step walk of $Y$ determines,
and is determined by, its sequence of edges, so it lifts to a unique
walk started at $(S,\varnothing)$, and conversely a closed lifted walk
at $(S,\varnothing)$ projects to one.  For a closed row
walk with odd-label set $Q$, row closure gives
$\bigsd_{F\in Q}R_F=\varnothing$; each step uses two labels from one
bundle, so every bundle meets $Q$ evenly and the centers cancel in
pairs, $\bigsd_{F\in Q}F=\varnothing$.  A nonempty $Q$ would thus be
an even cover of size at most $4s$, which is forbidden; hence every
closed row walk returns to empty memory, with memory sizes at most
$\min\{2j,2(2s-j)\}\le2s$ along the way, so the truncation discards
nothing.

\emph{Tagged representatives.}  For a lifted state put
$W(S,M):=S\cup\bigcup_{F\in M}R_F$, so $|W(S,M)|\le\ell+2ks$.
Introduce one formal coordinate $\tau_\cB$ per bundle, and associate to
each packed hyperedge $F$ the augmented vector
$v_F:=\1_{R_F}+\1_{\{\tau_{\cB(F)}\}}$ and the augmented set
$\widehat F:=R_F\cup\{\tau_{\cB(F)}\}$.  For every $W\subseteq[n]$
with $|W|\le\ell+2ks$, the family $\{v_F:R_F\subseteq W\}$ is linearly
independent: a minimal dependency $\cC$, say of size $p$ and meeting
$\beta$ bundles, meets every bundle evenly because of the tag
coordinates, so $\beta\le p/2$, while minimality gives
$p-1\le|W|+\beta$, whence
$p\le2|W|+2\le2\ell+4ks+2$; summing the members of $\cC$, the
residuals cancel and the even bundle counts cancel the centers, so
$\cC$ is a forbidden even cover.  As in \cref{lem:even-window},
independence implies Hall's condition, and the sets $\widehat F$ with
$R_F\subseteq W$ admit a system of distinct representatives $\rho_W$.

\emph{Orientation.}  Orient by the rules of \cref{sec:even}, with one
change in the choice of witness: for an equal-support edge with labels
$F,J$, the sets $\widehat F,\widehat J$ share the bundle tag, so at
least one label---after a deterministic tie-break---has a genuine
representative $v\in[n]$, and since $R_F\cup R_J=S_x\sd S_y$, this
witness lies in exactly one endpoint row; orient toward it.  For
unequal supports, orient toward an endpoint whose support contains a
vertex absent from the other, the witness being the smallest such
vertex; here the two supports need no longer be nested, because a step
may enter one label and remove the other, so we break ties
deterministically.

\emph{Incoming weight at most $2\ell$.}  For equal-support in-edges at
$x=(S,M)$, the witness $v\in S$ determines the label
$F_v:=\rho_{W(x)}^{-1}(v)$, so all in-edges with witness $v$ project
to distinct edges of $Y$ at the port $(S,F_v)$, of total weight at
most $1$ by \eqref{eq:odd-port-capacity}; and there are at most $\ell$
witnesses.  For unequal-support in-edges, the argument of
\cref{lem:even-indegree} applies with residuals in place of
hyperedges.  Its first two steps are unchanged: the witness
$v\in W(x)\setminus W(y)$ lies in $S$, because the step moves exactly
the vertices of $R_F\cup R_J$; it belongs to the residual of exactly
one of the two toggled labels, their residuals being disjoint; and
that label was entered, since a removed label keeps its residual
inside $W(y)$.  The one new point is uniqueness within $M$: a second
member of $M$ whose residual contains $v$ either lies in $M_y$,
putting $v\in W(y)$, or is the step's other label---then also entered,
with residual disjoint from the first.  Hence $F_v$ depends only on
$(x,v)$, all in-edges with witness $v$ use the port $(S,F_v)$, and the
incoming weight is at most $2\ell$ in total.

\emph{Conclusion.}  The weighted case of \cref{lem:orientation} gives
$\norm\Lop\le2\sqrt{2\ell/\bd}$, and the chain \eqref{eq:even-chain}
holds verbatim, giving $\norm K\le2\sqrt{2\ell/\bd}\,N^{1/(2s)}$;
combined with $\norm K\ge\frac12$ this rearranges to
$\bd\le32\,\ell\,N^{1/s}$.
\end{proof}

\begin{proof}[Proof of \cref{thm:odd-main}]
We prove this with $A=165$.  Set
$s:=\lceil\frac{40\,\ell}k\log\frac{en}\ell\rceil$, so that
$N^{1/s}\le e^{k/40}$ and, using $\ell\ge2k$, $\log(en/\ell)\ge1$, and
$2\ell+4k+2\le5\ell$,
\[
        \max\{4s,\ 2\ell+4ks+2\}
        \le2\ell+160\,\ell\log(en/\ell)+4k+2
        \le165\,\ell\log(en/\ell).
\]
If $\cH$ has no even cover of size at most $165\,\ell\log(en/\ell)$,
then \cref{prop:odd-trace} gives $\bd\le32\,\ell\,e^{k/40}$, and
comparing with \cref{lem:odd-gain},
\[
        m_+\le2368\,k^3
        \Bigl(\frac{e^{1/40}}{1.0324}\Bigr)^{k}
        \,\ell\Bigl(\frac n\ell\Bigr)^{k/2}
        \le C\,\frac{n^{k/2}}{\ell^{k/2-1}},
\]
since $k^3(e^{1/40}/1.0324)^k\le k^3(0.994)^k$ is bounded.  Adding the
remainder bound of \cref{lem:odd-remainder} gives
$|\cH|\le C\,n^{k/2}/\ell^{k/2-1}$.
\end{proof}

\section{All levels}\label{sec:completion}

We now remove the central-range hypothesis \eqref{eq:central}.

\subsection{Above the central range}

The dense regime is elementary: it follows from counting syndromes.

\begin{lemma}\label{lem:dense}
Suppose $n\le100\,k\ell$.  Then at least one of
\[
        g_{\rm ev}(\cH)\le C\ell\log\frac{en}\ell,
        \qquad
        |\cH|\le C\,\frac{n^{k/2}}{\ell^{k/2-1}}
\]
holds.
\end{lemma}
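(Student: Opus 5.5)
We prove \cref{lem:dense} by a syndrome-counting (pigeonhole) argument over $\F_2^n$. Suppose $|\cH|>C\,n^{k/2}/\ell^{k/2-1}$. In the regime $n\le100k\ell$ we have $n/\ell\le100k$, so
\[
C\,\frac{n^{k/2}}{\ell^{k/2-1}}=C\,n\Bigl(\frac n\ell\Bigr)^{k/2-1}\ge C\,n .
\]
It therefore suffices to show that any family of $m\ge Cn$ hyperedges contains an even cover of size at most $C\ell\log(en/\ell)$. The right-hand side is at least $C\ell$, and since $\ell\ge n/(100k)$ we should aim for a cover of size $O(n/k)$, or more generally $O(\ell)$. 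We take $C$ large, say $C\ge10^4$.

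The plan is to pick a sum size $t$ and consider all $t$-subsets $\cT\subseteq\cH$ together with their syndromes $\bigsd_{F\in\cT}F$. Each syndrome is a set of size at most $tk$. If the number of $t$-subsets exceeds the number of possible syndromes, two distinct subsets $\cT\ne\cT'$ share a syndrome. Then $\cT\sd\cT'$ is nonempty, has size at most $2t$, and XORs to $\varnothing$, so it is an even cover. The pigeonhole condition is
\[
\binom mt>\sum_{j\le tk}\binom nj .
\]
Using $\binom mt\ge(m/t)^t$ and $\sum_{j\le tk}\binom nj\le(en/(tk))^{tk}$ when $tk\le n/2$, the condition holds once $m/t>(en/(tk))^k$. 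That is too lossy. It is better to bound the syndrome count trivially by $2^n$. Then we need $t\log(m/t)>n\log2$, and with $t=\lceil n/k\rceil$ (or $t\approx n/\log(m/t)$) this needs $m/t>2^{k}$. That is fine only if $m\ge Cn(n/\ell)^{k/2-1}$ exceeds $(n/k)2^k$. This holds when $(n/\ell)^{k/2-1}\ge 2^k$, i.e.\ when $n/\ell\ge 4^{1+O(1/k)}$.

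So I will split into two cases. \textbf{Case 1:} $n/\ell\ge 16$ (say). Take $t:=\lceil n\log2/\log(m/t)\rceil$. With $m\ge Cn(n/\ell)^{k/2-1}$ we get $\log(m/t)\gtrsim(k/2-1)\log(n/\ell)+\log C$, so $t\lesssim n/(k\log(n/\ell))\le 100\ell/\log(n/\ell)$. That is well within $C\ell\log(en/\ell)$. \textbf{Case 2:} $n/\ell<16$. Here we only need a cover of size $O(\ell)=O(n)$. Any $n+1$ hyperedges are linearly dependent over $\F_2$, so a minimal dependency gives an even cover of size at most $n+1\le16\ell+1\le C\ell\log(en/\ell)$, provided $|\cH|>n$, which holds because $C\ge1$.

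The main obstacle is the middle range of $n/\ell$, specifically the arithmetic in Case 1. There I must check that $t\le m/2$ so that $\binom mt\ge(m/t)^t$ is usable, and that $\log(m/t)$ is bounded below uniformly in $k\ge3$. For $k=3$ the exponent $k/2-1=1/2$ is small, so the bound rests on the factor $C$ and on $(n/\ell)^{1/2}$. I will handle this by using the cruder linear-algebra bound $t\le n+1$ whenever $n/\ell$ is below a fixed constant. I will also note that in fact any threshold works: Case 2's bound $n+1\le 100k\ell$ is only acceptable for bounded $k$. So for large $k$ I rely on Case 1, where $(n/\ell)^{k/2-1}$ beats $2^k$ as soon as $n/\ell\ge16$. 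The remaining range $n/\ell<16$ gives $n+1=O(\ell)$ directly, independent of $k$.
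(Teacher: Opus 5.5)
Your proof is correct and is essentially the paper's syndrome-counting argument: both pigeonhole the sums $\sum_{F\in\cT}\1_F\in\F_2^n$ over $t$-subfamilies and treat bounded $n/\ell$ separately, with the paper fixing $t=\lceil 400\,\ell\log(en/\ell)\rceil$ and deriving $m\le t\,2^{n/t}$, while you fix $m$ and solve for $t$ (which even gives covers of size $O(\ell)$ in this regime). Two write-up repairs are needed: the sentence reducing to ``any $m\ge Cn$ hyperedges'' claims more than is true (for large $k$ and $\ell=n/(100k)$, random hypergraphs with $Cn$ edges have no even cover of size $O(\ell\log(en/\ell))$) and is not what you use, since Case~1 needs the full bound $m>Cn(n/\ell)^{k/2-1}$; and the circular choice of $t$ should be replaced by $t:=\lceil n\log 2/L\rceil$ with $L:=\log\bigl(C(n/\ell)^{k/2-1}\bigr)$, which gives $t\le n<m$, hence $\log(m/t)>L$ and $\binom mt\ge(m/t)^t>2^n$, while $n\le100k\ell$ and $k/(k/2-1)\le6$ give $t\le 600\,\ell\log2/\log(n/\ell)+1$.
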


\begin{proof}
Put $t:=\lceil400\,\ell\log(en/\ell)\rceil$ and $m:=|\cH|$, and
suppose that $g_{\rm ev}(\cH)>2t$; since $2t\le C\ell\log(en/\ell)$, it
suffices to show $m\le C\ell(n/\ell)^{k/2}$.  We will use twice the
elementary bound $\log(en/\ell)\le2(n/\ell)^{1/2}$.  If $m<t$, then
$m\le C\ell\log(en/\ell)\le C\ell(n/\ell)^{1/2}$ and we are done.  If
$m\ge t$, consider the map sending a $t$-element subfamily
$\cT\subseteq\cH$ to $\sum_{F\in\cT}\1_F\in\F_2^n$.  It is injective,
because two subfamilies with the same image differ by a nonempty even
cover of size at most $2t$; hence $(m/t)^t\le\binom mt\le2^n$ and
$m\le t\,2^{n/t}\le
t\exp\bigl((\log2)(n/\ell)/(400\log(en/\ell))\bigr)$.
If $n/\ell\le2$, the exponent is at most $1$ and $m\le et$ and we conclude as in the case $m < t$.  If
$n/\ell\ge2$, then $n/\ell\le100\,k$, $\log(en/\ell)\ge1$, and
$\log(n/\ell)\ge\log2$ bound the exponent by
$(\log2)\,k/4\le(k/4)\log(n/\ell)$, so $m\le t\,(n/\ell)^{k/4}
\le C\ell\,(n/\ell)^{1/2+k/4}\le C\ell(n/\ell)^{k/2}$.
\end{proof}

\subsection{Crossed halves}

We now deal with the remaining regime $k>400\,\ell$. 

\begin{proposition}\label{prop:half}
Let $k>400\,\ell$, of either parity.  Then at least one of
\[
        g_{\rm ev}(\cH)\le22\,\ell\log\frac{en}\ell,
        \qquad
        |\cH|\le50\,\frac{n^{k/2}}{\ell^{k/2-1}}
\]
holds.
\end{proposition}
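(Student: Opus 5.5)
The plan is to bypass Kikuchi matrices entirely. When $k>400\,\ell$, a single hyperedge already exceeds the window budget $\ell+ks$ of \cref{lem:even-window} and \cref{prop:odd-trace}. On the other hand, the target density $\ell(n/\ell)^{k/2}$ is then enormous compared with the number of possible half-hyperedges. So we reduce to the ordinary irregular Moore bound for graphs by cutting each hyperedge into two crossed halves.

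\emph{Step 1 (balanced split).} Colour each vertex of $[n]$ independently and uniformly red or blue, giving a partition $[n]=X\sqcup Y$. Call $F$ balanced if $|F\cap X|=\lfloor k/2\rfloor$. Each $F$ is balanced with probability $\binom{k}{\lfloor k/2\rfloor}2^{-k}\ge1/(2\sqrt k)$. Fix a partition for which the family $\cH'$ of balanced hyperedges has $|\cH'|\ge|\cH|/(2\sqrt k)$.

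\emph{Step 2 (the crossed-halves graph).} Build a bipartite graph $\Gamma$. Its left vertices are the $\lfloor k/2\rfloor$-subsets of $X$ and its right vertices are the $\lceil k/2\rceil$-subsets of $Y$. Each $F\in\cH'$ becomes the edge joining $F\cap X$ to $F\cap Y$. The graph is simple, since $F$ is recovered from its two halves. A cycle $F_1,\dots,F_{2j}$ in $\Gamma$ visits each of its vertices (half-sets) exactly twice. Hence $\bigsd_iF_i=\varnothing$, and the cycle is an even cover of size $2j$. Consequently $\Gamma$ has girth greater than $g:=22\,\ell\log(en/\ell)$, unless the first alternative of the proposition already holds.

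\emph{Step 3 (Moore bound).} Only the non-isolated vertices of $\Gamma$ matter. Their number is at most $V\le\binom{n}{\lfloor k/2\rfloor}+\binom n{\lceil k/2\rceil}\le2(2en/k)^{k/2}$. The Alon--Hoory--Linial bound~\cite{AHL}, applied in the crude form \[|E|\le V\bigl(1+V^{2/(g-2)}\bigr)\] for girth exceeding $g$, gives the following. Since $\log V\le\tfrac k2\log(2en/k)+1$ and $\log(2en/k)\le\log(en/\ell)$ (using $k\ge2\ell$), we get $V^{2/(g-2)}\le\exp\bigl(c\,k/\ell\bigr)$ with $c$ roughly $1/20$. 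Therefore
\[
|\cH|\le2\sqrt k\,|\cH'|\le8\sqrt k\,\Bigl(\frac{2en}k\Bigr)^{k/2}e^{ck/\ell}
=8\sqrt k\,\ell\Bigl(\frac n\ell\Bigr)^{k/2}\cdot\frac1\ell\Bigl(\frac{2e\ell}{k}\,e^{2c/\ell}\Bigr)^{k/2}.
\]
With $k>400\ell$, the bracket is at most $(2e/400)e^{2c}<1/50$. The factor $8\sqrt k\,(1/50)^{k/2}$ is then far below $50$, which gives $|\cH|\le50\,n^{k/2}/\ell^{k/2-1}$.

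The main obstacle is Step 3. One must check that the exponent $\log V/g$ really costs only $e^{O(k/\ell)}$ with a small enough constant, which is what the factor $22$ in the cover size has to buy. One must also state precisely which form of the irregular Moore bound is invoked, in particular its handling of average versus minimum degree and of the bipartite sides of unequal sizes. If the clean form is awkward, I would fall back on the elementary argument: pass to a subgraph of minimum degree at least $|E|/V$, then run breadth-first growth from a vertex. This yields $(|E|/V-1)^{g/2}\le V$ directly, and the rest of the computation is identical.
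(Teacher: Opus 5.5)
For even $k$ your argument is sound, and it is a genuinely more elementary route than the paper's. Both sides of $\Gamma$ consist of $k/2$-sets, so $V\le2(2en/k)^{k/2}$ holds, and every cycle of $\Gamma$ is an even cover. One also gets $V^{2/(g-2)}\le e^{0.1+k/(20\ell)}$, and your final computation then closes with room to spare, with no bundles, lifts or trace method needed.

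The gap is odd $k=2r+1$. There the right side of $\Gamma$ consists of $(r+1)$-subsets of $Y$, and the step $\binom{n}{\lfloor k/2\rfloor}+\binom{n}{\lceil k/2\rceil}\le2(2en/k)^{k/2}$ is false. Indeed, $\binom{n}{r+1}$ exceeds $(2en/k)^{k/2}$ by a factor that grows like $\sqrt n$. So does $\binom{|Y|}{r+1}$ for a typical colouring with $|Y|\approx n/2$. If you feed the correct vertex count into Step 3, your final bound picks up an extra factor of order $\sqrt n$. The factor $(1/50)^{k/2}$ absorbs it only while $n$ is at most exponential in $k$. Since the proposition places no upper bound on $n$, the conclusion $|\cH|\le50\,n^{k/2}/\ell^{k/2-1}$ does not follow.

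This is not bookkeeping that a sharper (for instance bipartite) Moore bound could repair. In an unbalanced bipartite graph, girth cannot push the edge count below the size of the larger side. A disjoint union of stars centred at $r$-subsets of $X$ is acyclic, yet it can have one edge for every $(r+1)$-subset of $Y$. Biasing the colouring does not help either. Whichever side carries the $(r+1)$-sets, its size divided by the probability that a hyperedge is retained is of order $n^{r+1}$ for fixed $k$. The even covers you must exploit here are invisible to $\Gamma$. For example, four hyperedges $D\cup E_1,\dots,D\cup E_4$ through one $r$-set $D$ with $E_1\sd E_2\sd E_3\sd E_4=\varnothing$ form an even cover, but in $\Gamma$ they form a star.

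The missing idea is a mechanism for many hyperedges through a common small set, and this is what the paper's proof supplies for both parities.
\begin{enumerate}
\item It packs spread bundles; the unpacked remainder is controlled by the codegree count of \cref{lem:odd-remainder}.
\item Each move toggles two members $F,J$ of one bundle at once, via the crossed halves of their residuals. The shared centre therefore cancels, and it is restored by a bundle tag.
\item The resulting graph is bounded by the lifted trace and orientation argument of \cref{prop:odd-trace}, using port capacity one, rather than by girth.
\end{enumerate}
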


\begin{proof}
We may assume $n\ge k$, since
otherwise $\cH=\varnothing$.  Run the spread-bundle packing of
\cref{sec:odd}, for either parity of $k$: by \cref{lem:odd-remainder},
$m_0\le\frac2k\,n^{k/2}/\ell^{k/2-1}$.  If $m_+=0$ we are done, so fix
a dominant residual size $q$ with $m_q\ge m_+/(k-1)$.

Put $a:=\lfloor q/2\rfloor$ and $b:=\lceil q/2\rceil$, and split every
residual at the dominant level canonically,
$R_F=R_F^-\mathbin{\dot\cup}R_F^+$, where $R_F^-$ consists of the $a$
smallest elements.  For $q\ge2$, as coordinates we take the
\emph{signed halves}
$\bigl(\binom{[n]}a\times\{-\}\bigr)
\sqcup\bigl(\binom{[n]}b\times\{+\}\bigr)$; the rows are the pairs
\[
 \Om:=\Bigl\{\{(D,-),(E,+)\}:
 D\in\binom{[n]}a,\ E\in\binom{[n]}b\Bigr\},
 \qquad
 N_q:=|\Om|=\binom na\binom nb,
\]
of size two, and the \emph{moving set} of a packed hyperedge $F$ is
the two-element set $\Rh F:=\{(R_F^-,-),(R_F^+,+)\}$.  For $q=1$,
where the halves degenerate, the rows are instead the singletons of
$[n]$, the moving set of $F$ is $R_F$ itself, and $N_1:=n$; every
step below simplifies accordingly.

Within one bundle, the map $F\mapsto R_F^-$ is injective when $q\ge2$:
two members whose residuals share a set $T$ of size $a$ would violate
\eqref{eq:spread}, because $b\le k/2$ gives
$(n/\ell)^{\,b-k/2}\le1$ and hence $h_{q-a}=h_b=2$.  Likewise
$F\mapsto R_F^+$ is injective, as $h_a=2$.  Hence
$\Rh F\cap\Rh J=\varnothing$ for any two distinct members $F,J$ of one
bundle; for $q=1$ this holds because the residuals are distinct
singletons, $\cH$ being simple.  The transition graph has, for every
unordered pair $F\ne J$ in one dominant-level bundle, the single edge
\begin{equation}\label{eq:crossed}
 \{(R_F^-,-),(R_J^+,+)\}
 \ \longleftrightarrow\
 \{(R_J^-,-),(R_F^+,+)\},
\end{equation}
with labels $F,J$ and weight $1$; both rows lie in $\Om$, they are
distinct by half-injectivity, and \eqref{eq:crossed} toggles exactly
the two moving sets.  Every port has capacity one: fix a port $(S,F)$
with $S=\{(D,-),(E,+)\}$.  An edge at $S$ with label $F$ and partner
$J$ satisfies either $D=R_F^-$ and $R_J^+=E$, or $E=R_F^+$ and
$R_J^-=D$; if both $D=R_F^-$ and $E=R_F^+$ hold, either case forces
$J=F$, so no edge uses the port, and otherwise at most one case
applies and determines $J$ by half-injectivity.  (For $q=1$, the
label $F$ has a unique residual-disjoint partner in its bundle, so
again at most one edge uses each port.)

For the density, every bundle contributes all $\binom{h_q}2\ge
h_q^2/4$ of its pairs, so the graph has at least $m_qh_q/4$ edges,
and $N_q\le n^q/(a!\,b!)$ gives
\[
        \bd\ \ge\ \frac{m_qh_q}{2N_q}
        \ \ge\ \frac{m_q\,h_q\,a!\,b!}{2\,n^q}
        \ \ge\ \frac{m_q}2\Bigl(\frac k{4en}\Bigr)^{k/2},
\]
where the last step is the claim
\begin{equation}\label{eq:factorial-gain}
        h_q\,a!\,b!\ \ge\ n^q\Bigl(\frac k{4en}\Bigr)^{k/2}
        \qquad(1\le q\le k-1).
\end{equation}
For $q=1$ this reads $2\ge n(k/4en)^{k/2}$, which holds because
$n\ge k$ gives $n(k/4en)^{k/2}\le k\,(4e)^{-k/2}\le1$.  For $q\ge2$,
$a!\,b!=q!/\binom qa\ge(q/e)^q2^{-q}=(q/2e)^q$.  If $q>k/2$, then
$h_q\ge(n/\ell)^{q-k/2}$ and $q/(2e\ell)\ge k/(4e\ell)>1$, so
$h_q\,a!\,b!\,n^{-q}\ge(q/(2e\ell))^{q}(\ell/n)^{k/2}
\ge(k/(4e\ell))^{k/2}(\ell/n)^{k/2}=(k/(4en))^{k/2}$.  If $q\le k/2$,
then, using $n\ge k$ and the fact that $t\mapsto(t/(2ek))^t$ decreases
for $t\le2k$,
\[
        \frac{a!\,b!\,n^{-q}}{(k/4en)^{k/2}}
        \ge\Bigl(\frac q{2e}\Bigr)^{q}k^{k/2-q}
        \Bigl(\frac{4e}k\Bigr)^{k/2}
        =\Bigl(\frac q{2ek}\Bigr)^{q}(4e)^{k/2}
        \ge\Bigl(\frac1{4e}\Bigr)^{k/2}(4e)^{k/2}=1 .
\]

The proof of \cref{prop:odd-trace} now applies verbatim, with the
signed halves in place of the vertices of $[n]$ and the moving sets in
place of the residuals (for $q=1$ it applies literally).  Indeed, it
used only that each edge of the graph toggles the disjoint moving
sets of its two labels together with the memory, and that every port
has capacity one---except in its trace-comparison and
tagged-representative steps, where the residuals of the odd labels
cancel; there one first applies the linear map
$\1_{(D,\pm)}\mapsto\1_D$, under which the moving sets again sum to
the residuals, and the tags restore the centers as before.  Rows now
have size at most $2$, so the incoming weight is at most $4$, and
windows have size at most $2+4s$, so the required cover radius is at
most $\max\{4s,\,2(2+4s)+2\}=8s+6$.

Set $s:=\lceil\ell\log(en/\ell)\rceil$.  Then
$\log N_q\le q\log(en)\le q\,\ell\log(en/\ell)
\le k\,\ell\log(en/\ell)$, using
$\log(en)=\log(en/\ell)+\log\ell\le\ell\log(en/\ell)$, so
$N_q^{1/s}\le e^{k}$, while
$8s+6\le8\ell\log(en/\ell)+14\le22\,\ell\log(en/\ell)$.  Thus, if
$\cH$ has no even cover of size at most $22\,\ell\log(en/\ell)$, the
chain \eqref{eq:even-chain}, with the incoming-weight bound $4$ in
place of $2\ell$, gives $\norm K\le2\sqrt{4/\bd}\,N_q^{1/(2s)}$ and
hence $\bd\le64\,N_q^{1/s}\le64\,e^k$.  Comparing with the density
bound, and using $4e^3\ell/k\le4e^3/400\le\tfrac14$,
\[
        m_q\le128\,e^k\Bigl(\frac{4en}k\Bigr)^{k/2}
        =\frac{128}\ell\Bigl(\frac{4e^3\ell}k\Bigr)^{k/2}
        \frac{n^{k/2}}{\ell^{k/2-1}}
        \le\frac{128}{2^{k}}\,
        \frac{n^{k/2}}{\ell^{k/2-1}},
\]
so $m_+\le(k-1)\,m_q\le128\,k\,2^{-k}\,n^{k/2}/\ell^{k/2-1}
\le48\,n^{k/2}/\ell^{k/2-1}$.  Adding $m_0$ completes the proof.
\end{proof}

\subsection{Proof of \texorpdfstring{\cref{thm:main}}{the main theorem}}

\begin{proof}[Proof of \cref{thm:main}]
If $n\le100\,k\ell$, apply \cref{lem:dense}.

Suppose next that $n>100\,k\ell$ and $k\le400\,\ell$, and put
$\lambda:=\max\{\ell,2k\}$, so that $\ell\le\lambda\le800\,\ell$ and
$\lambda\le n$, because $n\ge100\,k\ell\ge\max\{\ell,2k\}$.  If
$n\ge100\,k\lambda$, the central range \eqref{eq:central} holds at the
level $\lambda$ and \cref{thm:even-main} or \cref{thm:odd-main}
applies there; otherwise \cref{lem:dense} applies at the level
$\lambda$.  In either case, at least one of
\[
        g_{\rm ev}(\cH)\le C\lambda\log\frac{en}\lambda
        \le800\,C\,\ell\log\frac{en}\ell,
        \qquad
        |\cH|\le C\,\frac{n^{k/2}}{\lambda^{k/2-1}}
        \le C\,\frac{n^{k/2}}{\ell^{k/2-1}}
\]
holds, using $\log(en/\lambda)\le\log(en/\ell)$, $\lambda\ge\ell$, and
$k\ge3$.

Finally, if $k>400\,\ell$, apply \cref{prop:half}; when $n<k$ the
hypergraph is empty and there is nothing to prove.  All constants used are absolute and independent of $k$.
\end{proof}

\end{document}